\newtheorem{theorem}{Theorem}[section]
\newtheorem{lemma}[theorem]{Lemma}
\newtheorem{proposition}[theorem]{Proposition}
\theoremstyle{definition}
\newtheorem{remark}[theorem]{Remark}
\numberwithin{equation}{section}
\begin{document}

\title[Composition operators]{Boundedness and compactness of composition operators on Segal-Bargmann spaces}
\author{Trieu Le}
\address{Department of Mathematics and Statistics, Mail Stop 942, University of Toledo, Toledo, OH 43606}
\email{trieu.le2@utoledo.edu}

\subjclass[2010]{Primary 47B38; Secondary 47B15, 47B33}

\begin{abstract} For $E$ a Hilbert space, let $\mathcal{H}(E)$ denote the Segal-Bargmann space (also known as the Fock space) over $E$, which is a reproducing kernel Hilbert space with kernel $K(x,y)=\exp(\langle x,y\rangle)$ for $x,y$ in $E$. If $\varphi$ is a mapping on $E$, the composition operator $C_{\varphi}$ is defined by $C_{\varphi}h = h\circ\varphi$ for $h\in \mathcal{H}(E)$ for which $h\circ\varphi$ also belongs to $\mathcal{H}(E)$. We determine necessary and sufficient conditions for the boundedness and compactness of $C_{\varphi}$. Our results generalize results obtained earlier by Carswell, MacCluer and Schuster for finite dimensional spaces $E$.
\end{abstract}
\maketitle

\section{Introduction}\label{S:intro}
Let $\mathcal{H}$ be a Banach space of functions on a set $X$ and $\varphi: X\rightarrow X$ be a map. We define the composition operator $C_{\varphi}$ by $C_{\varphi}h = h\circ\varphi$ for all functions $h\in\mathcal{H}$ for which the function $h\circ\varphi$ also belongs to $\mathcal{H}$. We are often interested in the problem of classifying the functions $\varphi$ which induce bounded or compact operators $C_{\varphi}$. There is a vast literature on this problem when $\mathcal{H}$ is the Hardy, Bergman or Bloch space over the unit disc on the plane or the unit ball in $\mathbb{C}^n$ (see, for example, \cite{ColonnaJMAA2009,CowenCRCP1995,Jafari1998,TjaniBAMS2000,Shapiro1993,StevicAMC2010} and references therein). In \cite{CarswellSzeged2003}, Carswell, MacCluer and Schuster studied composition operators on the Segal-Bargmann space (also known as the Fock space) over $\mathbb{C}^n$. They obtained necessary and sufficient conditions on the functions $\varphi$ that give rise to bounded or compact $C_{\varphi}$. They showed any such function $\varphi$ must be affine with an additional restriction. They also provided a formula for the norm of $C_{\varphi}$. This is an interesting result since the problem of computing the norm of a bounded composition operator on other classical function spaces is still an open problem.

Let $n\geq 1$ be an integer. We denote by $d\mu(z)=\pi^{-n}\exp(-|z|^2)dV(z)$ the Gaussian measure on $\mathbb{C}^n$, where $dV$ is the usual Lebesgue volume measure on $\mathbb{C}^n\equiv\mathbb{R}^{2n}$. The Segal-Bargmann (Fock) space $\mathcal{F}_n$ is the space of all entire functions on $\mathbb{C}^n$ that are square integrable with respect to $d\mu$. For $f,g\in\mathcal{F}_n$, the inner product $\langle f,g\rangle$ is given by
\begin{equation*}
\langle f,g\rangle = \int_{\mathbb{C}^n}f(z)\overline{g(z)}\ d\mu(z)= \frac{1}{\pi^n}\int_{\mathbb{C}^n}f(z)\overline{g(z)}\exp(-|z|^2)\ dV(z).
\end{equation*}
It is well known that $\mathcal{F}_n$ contains an orthonormal basis consisting of monomials. In fact, for any multi-index $\alpha=(\alpha_1,\ldots,\alpha_n)$ of non-negative integers, if we put $f_{\alpha}(z) = (\alpha!)^{-1/2}z^{\alpha}$, where $\alpha!=\alpha_1!\cdots \alpha_n!$ and $z^{\alpha} = z_1^{\alpha_1}\cdots z_n^{\alpha_n}$, then $\{f_{\alpha}: \alpha\in\mathbb{Z}_{\geq 0}^{n}\}$ is an orthonormal basis for $\mathcal{F}_n$. It is also well known that $\mathcal{F}_n$ is a reproducing kernel Hilbert space of functions on $\mathbb{C}^n$ with kernel $K(z,w)=\exp(\langle z,w\rangle)$. For more details on $\mathcal{F}_n$, see, for example, Section 1.6 in \cite{Folland1989}. We would like to alert the reader that other authors use slightly different versions of the Gaussian measure (for example, $d\mu(z)=(2\pi)^{-n}\exp(-|z|^2/2)dV(z)$) and hence the resulting reproducing kernels have different formulas (for example, $K(z,w)=\exp(\langle z,w\rangle/2)$). Our choice of the constant here is just for the simplicity of the formulas.

The following theorem \cite[Theorem 1]{CarswellSzeged2003} characterizes bounded and compact composition operators on $\mathcal{F}_n$.

\begin{theorem}[Carswell, MacCluer and Schuster]\label{T:CMS} Suppose $\varphi:\mathbb{C}^n\to\mathbb{C}^n$ is a holomorphic mapping.
\begin{itemize}
\item[(a)] $C_{\varphi}$ is bounded on $\mathcal{F}_n$ if and only if $\varphi(z)=Az+b$, where $A$ is an $n\times n$ matrix with $\|A\|\leq 1$ and $b$ is an $n\times 1$ vector such that $\langle A\zeta,b\rangle=0$ whenever $|A\zeta|=|\zeta|$.
\item[(b)] $C_{\varphi}$ is compact on $\mathcal{F}_n$ if and only if $\varphi(z)=Az+b$, where $\|A\|<1$ and $b$ is any $n\times 1$ vector.
\end{itemize}
\end{theorem}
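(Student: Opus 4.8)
The plan is to run the classical argument through the reproducing kernels $K_w:=K(\cdot,w)$ (so $\|K_w\|^2=\exp(|w|^2)$), to reduce the affine case to one complex variable via a singular value decomposition, and to finish with a pointwise comparison of Gaussian densities; as is standard, ``$C_\varphi$ bounded'' is understood to mean that $h\circ\varphi\in\mathcal{F}_n$ for every $h\in\mathcal{F}_n$ and that the resulting operator is bounded. \emph{Step 1 (a bounded $C_\varphi$ is affine).} If $C_\varphi$ is bounded then, since $\langle C_\varphi h,K_w\rangle=h(\varphi(w))=\langle h,K_{\varphi(w)}\rangle$ for all $h\in\mathcal{F}_n$, we have $C_\varphi^{*}K_w=K_{\varphi(w)}$, hence $\exp(\tfrac12|\varphi(w)|^2)=\|K_{\varphi(w)}\|=\|C_\varphi^{*}K_w\|\le\|C_\varphi\|\exp(\tfrac12|w|^2)$, i.e. $|\varphi(w)|^2\le|w|^2+2\log\|C_\varphi\|$ for all $w$. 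Thus each component $\varphi_j$ is an entire function of at most linear growth, so Cauchy's estimates on large polydiscs force all its Taylor coefficients of order $\ge2$ to vanish; hence $\varphi(z)=Az+b$. It then remains to show that, for affine $\varphi$, the (necessary) condition $\sup_z(|Az+b|^2-|z|^2)<\infty$ is equivalent to the stated restrictions on $A,b$ and is also sufficient for boundedness.

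\emph{Step 2 (the affine case, proof of (a)).} Since $\mu$ is invariant under unitary matrices, $C_\rho$ is a unitary operator on $\mathcal{F}_n$ whenever $\rho$ is unitary; combining this with $C_{\psi\circ\rho}=C_\rho C_\psi$ and a singular value decomposition $A=U\Sigma V^{*}$, $\Sigma=\mathrm{diag}(\sigma_1,\dots,\sigma_n)$ with $\sigma_j\ge0$, we reduce to $\psi(z)=\Sigma z+c$ with $c=U^{*}b$ (so $C_\varphi$ is bounded iff $C_\psi$ is). A unitary change of variable also gives $\sup_z(|Az+b|^2-|z|^2)=\sum_{j=1}^{n}\sup_{\zeta\in\mathbb{C}}(|\sigma_j\zeta+c_j|^2-|\zeta|^2)$, and an elementary computation shows the $j$-th term is finite exactly when $\sigma_j<1$, or $\sigma_j=1$ and $c_j=0$; tracing this back through $A=U\Sigma V^{*}$ (using that $|A\zeta|=|\zeta|$ precisely when $V^{*}\zeta$ is supported on $\{j:\sigma_j=1\}$ and that $\langle A\zeta,b\rangle=\sum_j\sigma_j(V^{*}\zeta)_j\overline{c_j}$) one checks this amounts to ``$\|A\|\le1$ and $\langle A\zeta,b\rangle=0$ whenever $|A\zeta|=|\zeta|$'', which with Step 1 settles the necessity. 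For sufficiency, assume these conditions; under the natural unitary isomorphism $\mathcal{F}_n\cong\mathcal{F}_1\otimes\cdots\otimes\mathcal{F}_1$ one has $C_\psi=C_{\psi_1}\otimes\cdots\otimes C_{\psi_n}$ with $\psi_j(\zeta)=\sigma_j\zeta+c_j$ on $\mathbb{C}$, so it suffices to bound each $C_{\psi_j}$ on $\mathcal{F}_1$. If $\sigma_j=1$ then $c_j=0$ and $C_{\psi_j}$ is the identity; if $0<\sigma_j<1$, the substitution $u=\sigma_j\zeta+c_j$ gives
\[ \|C_{\psi_j}h\|^2=\frac{1}{\pi\sigma_j^2}\int_{\mathbb{C}}|h(u)|^2\exp\!\Big(-\frac{|u-c_j|^2}{\sigma_j^2}\Big)\,dV(u)\le\Big(\sup_{u\in\mathbb{C}}\frac{1}{\sigma_j^2}\exp\big(|u|^2-|u-c_j|^2/\sigma_j^2\big)\Big)\|h\|^2, \]
and the supremum is finite because $|u|^2-|u-c_j|^2/\sigma_j^2$ is a concave quadratic in $u$ (while if $\sigma_j=0$ then simply $\|C_{\psi_j}h\|=|h(c_j)|\le e^{|c_j|^2/2}\|h\|$). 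Hence $C_\varphi$ is bounded, completing (a).

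\emph{Step 3 (proof of (b)).} If $C_\varphi$ is compact it is bounded, so $\varphi(z)=Az+b$ as above; moreover the normalized kernels $k_w=K_w/\|K_w\|$ tend to $0$ weakly as $|w|\to\infty$ (for polynomial $h$ one has $\langle h,k_w\rangle=h(w)\exp(-|w|^2/2)\to0$, and $\|k_w\|=1$), so compactness of $C_\varphi^{*}$ forces $\exp(\tfrac12(|\varphi(w)|^2-|w|^2))=\|C_\varphi^{*}k_w\|\to0$, i.e. $|\varphi(w)|^2-|w|^2\to-\infty$. If $\|A\|=1$, taking a unit vector $\zeta$ with $|A\zeta|=|\zeta|=1$ and $w=t\zeta$ gives $|\varphi(w)|^2-|w|^2=|b|^2$ (using $\langle A\zeta,b\rangle=0$), a contradiction; hence $\|A\|<1$. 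Conversely, let $\|A\|<1$ with $b$ arbitrary; the orthogonality condition then holds trivially, so $C_\varphi$ is bounded by (a). For compactness, take $\|h_k\|\le1$ with $h_k\to0$ weakly; then $h_k\to0$ uniformly on compact sets (a locally bounded, hence normal, family of entire functions converging pointwise to $0$). For fixed $R$, $\int_{|z|\le R}|h_k(Az+b)|^2\,d\mu(z)\to0$ as $k\to\infty$ since $\{Az+b:|z|\le R\}$ is compact; and $\int_{|z|>R}|h_k(Az+b)|^2\,d\mu(z)\le\pi^{-n}\int_{|z|>R}\exp(|Az+b|^2-|z|^2)\,dV(z)$ by the pointwise bound $|h_k(u)|^2\le\|h_k\|^2\exp(|u|^2)$, and since $|Az+b|^2-|z|^2\le(\|A\|^2-1)|z|^2+2\|A\|\,|b|\,|z|+|b|^2\to-\infty$ this is small uniformly in $k$ once $R$ is large. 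Hence $C_\varphi h_k\to0$, so $C_\varphi$ is compact.

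\emph{Main obstacle.} The kernel estimates and the one-variable computation are routine; the genuinely delicate point is the sufficiency of boundedness in the borderline case $\|A\|=1$, where a naive pointwise comparison of Gaussian densities on $\mathbb{C}^n$ fails. The singular value decomposition, the resulting tensor factorization of $\mathcal{F}_n$, and the observation that the hypothesis forces $b$ to be orthogonal to the isometric part of $A$ (so that the corresponding tensor factors are just the identity operator) are exactly what resolves it; getting the dictionary between ``$\langle A\zeta,b\rangle=0$ whenever $|A\zeta|=|\zeta|$'' and ``$c_j=0$ whenever $\sigma_j=1$'' precisely right is the other step that requires care.
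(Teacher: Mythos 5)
Your proof is correct, but it takes a genuinely different route from the one in this paper. You follow the original Carswell--MacCluer--Schuster strategy: reduce the affine case via a singular value decomposition $A=U\Sigma V^{*}$ and the unitary invariance of the Gaussian measure, factor $\mathcal{F}_n\cong\mathcal{F}_1\otimes\cdots\otimes\mathcal{F}_1$ so that $C_{\psi}$ becomes a tensor product of one-variable operators, dispose of each factor by an explicit change of variables in the Gaussian integral, and settle compactness with normalized kernels plus a normal-families/tail-estimate argument. The paper never argues this way; it obtains Theorem \ref{T:CMS} as a corollary of Theorems \ref{T:bounded_COs} and \ref{T:cpt_COs}, whose proofs use Nordgren's criterion that $\Phi_M(z,w)=M^2e^{\langle z,w\rangle}-e^{\langle\varphi(z),\varphi(w)\rangle}$ be a positive semi-definite kernel. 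There, the delicate sufficiency in the borderline case $\|A\|=1$ is handled not by tensor factorization but by the algebraic identity $\Phi_M=G\cdot(e^{F}-1)$ with $F(z,w)=\langle(I-A^{*}A)z,w\rangle-\langle z,A^{*}b\rangle-\langle A^{*}b,w\rangle+\|v\|^2$, whose positive semi-definiteness comes from a Douglas-type range-inclusion lemma (Proposition \ref{Prop:quadratic_forms}); the hypothesis on $b$ is recast as $A^{*}b\in\operatorname{ran}(I-A^{*}A)^{1/2}$, which is equivalent to your orthogonality condition precisely because range and closed range coincide in finite dimensions. What your approach buys is an elementary, self-contained computation with explicit Gaussian integrals; what the paper's approach buys is independence from the SVD, from the Gaussian measure, and from compactness of the unit sphere (which you use to pick $\zeta$ with $|A\zeta|=\|A\|=1$ in Step 3), so that the same argument survives when $\dim E=\infty$ --- where, as the example in Section \ref{S:boundedCOs} shows, the orthogonality condition on $b$ becomes strictly weaker than the range condition and the dictionary in your Step 2 would no longer characterize boundedness. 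As a proof of the finite-dimensional statement as posed, your argument is complete and correct.
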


The norm of $C_{\varphi}$ is given by the next theorem, which is Theorem 4 in \cite{CarswellSzeged2003}. We alert the reader that the  formula presented here is slightly different from the original formula given in \cite{CarswellSzeged2003} because our reproducing kernel is $K(z,w)=\exp(\langle z,w\rangle)$ whereas theirs was $\exp(\frac{1}{2}\langle z,w\rangle)$.

\begin{theorem}\label{T:CMS_norm} Suppose $\varphi(z)=Az+B$, where $\|A\|\leq 1$ and $\langle A\zeta,b\rangle=0$ whenever $|A\zeta|=|\zeta|$. Then the norm of $C_{\varphi}$ on $\mathcal{F}_n$ is given by
\begin{equation}\label{Eqn:CMS_norm}
\|C_{\varphi}\| = \exp\Big(\frac{1}{2}(|w_0|^2-|Aw_0|^2+|b|^2)\Big),
\end{equation}
where $w_0$ is any solution to $(I-A^{*}A)w_0 = A^{*}b$.
\end{theorem}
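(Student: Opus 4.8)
I would establish the formula by proving the two matching inequalities. For the lower bound, use the reproducing kernels. Since $C_\varphi$ is bounded (Theorem~\ref{T:CMS}), its adjoint satisfies $C_\varphi^{*}K(\cdot,w)=K(\cdot,\varphi(w))$ for every $w$, so
\[
\|C_\varphi\|^2=\|C_\varphi^{*}\|^2\ \geq\ \sup_{w\in\mathbb{C}^n}\frac{\|K(\cdot,\varphi(w))\|^2}{\|K(\cdot,w)\|^2}=\sup_{w\in\mathbb{C}^n}\exp\big(|Aw+b|^2-|w|^2\big).
\]
The exponent $|Aw+b|^2-|w|^2=-\langle(I-A^{*}A)w,w\rangle+2\operatorname{Re}\langle w,A^{*}b\rangle+|b|^2$ is a concave quadratic on $\mathbb{C}^n\cong\mathbb{R}^{2n}$ (concave because $\|A\|\le1$ makes $I-A^{*}A\ge0$), so every critical point is a global maximizer, and a critical point is precisely a solution $w_0$ of $(I-A^{*}A)w_0=A^{*}b$. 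Such a $w_0$ exists because the hypothesis forces $A^{*}b$ to be orthogonal to $\ker(I-A^{*}A)$, hence to lie in $\operatorname{ran}(I-A^{*}A)$: if $|A\eta|=|\eta|$ then $\langle(I-A^{*}A)\eta,\eta\rangle=0$, so by positivity $A^{*}A\eta=\eta$, and $\langle\eta,A^{*}b\rangle=\langle A\eta,b\rangle=0$. Substituting $(I-A^{*}A)w_0=A^{*}b$ back in, the maximum of the quadratic equals $|w_0|^2-|Aw_0|^2+|b|^2$, and this value is the same for every solution (two solutions differ by an element of $\ker(I-A^{*}A)$, which is orthogonal to $A^{*}b$). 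Hence $\|C_\varphi\|\ge\exp\big(\tfrac12(|w_0|^2-|Aw_0|^2+|b|^2)\big)$.

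For the upper bound I would relate $C_\varphi$, via unitary conjugation and a positive scalar, to the composition operator of the linear map $z\mapsto Az$. Two kinds of unitaries on $\mathcal{F}_n$ are needed: the Weyl operators $W_v h(z)=h(z-v)\exp(\langle z,v\rangle-\tfrac12|v|^2)$, which are unitary (a standard monomial-basis computation), and the operators $C_{Uz}$ with $U$ a unitary matrix, which are unitary because $z\mapsto Uz$ preserves $d\mu$ and the entire functions. Now pick $a$ with $(I-AA^{*})a=b$; such $a$ exists because, just as above, the hypothesis forces $b\perp\ker(I-AA^{*})$: if $|A^{*}a|=|a|$ then $AA^{*}a=a$ (by positivity), so with $\zeta:=A^{*}a$ one has $|A\zeta|=|AA^{*}a|=|a|=|\zeta|$ and therefore $\langle a,b\rangle=\langle AA^{*}a,b\rangle=\langle A\zeta,b\rangle=0$. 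A direct computation — substituting $Az+b-a=A(z-A^{*}a)$ in $(C_\varphi W_a h)(z)=(W_a h)(Az+b)$ and simplifying the exponential prefactors with $\langle Az,a\rangle=\langle z,A^{*}a\rangle$ — yields the operator identity
\[
C_\varphi\,W_a=\lambda\,W_{A^{*}a}\,C_{Az},\qquad\lambda:=\exp\big(\langle b,a\rangle-\tfrac12|a|^2+\tfrac12|A^{*}a|^2\big).
\]
Because $\langle b,a\rangle=\langle(I-AA^{*})a,a\rangle=|a|^2-|A^{*}a|^2$ is real and nonnegative, $\lambda=\exp\big(\tfrac12(|a|^2-|A^{*}a|^2)\big)>0$, and since $W_a$ and $W_{A^{*}a}$ are unitary, $\|C_\varphi\|=\lambda\,\|C_{Az}\|$.

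Finally, $\|C_{Az}\|=1$. Taking a singular value decomposition $A=V\Sigma W^{*}$ with $V,W$ unitary and $\Sigma=\operatorname{diag}(s_1,\dots,s_n)$, $0\le s_j\le1$, and using that composition operators compose in reverse order, $C_{Az}=C_{W^{*}z}C_{\Sigma z}C_{Vz}$, whence $\|C_{Az}\|=\|C_{\Sigma z}\|$. Under $\mathcal{F}_n\cong\mathcal{F}_1\otimes\cdots\otimes\mathcal{F}_1$ one has $C_{\Sigma z}=C_{s_1z}\otimes\cdots\otimes C_{s_nz}$, and $C_{s_jz}$ acts on the monomial basis $\{z^k/\sqrt{k!}\}$ of $\mathcal{F}_1$ by $z^k\mapsto s_j^{k}z^k$, so $\|C_{s_jz}\|=\sup_{k\ge0}s_j^{k}=1$; hence $\|C_{\Sigma z}\|=1$ and $\|C_\varphi\|=\lambda=\exp\big(\tfrac12(|a|^2-|A^{*}a|^2)\big)$. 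To see this equals the stated formula, set $w_0:=A^{*}a$: then $(I-A^{*}A)w_0=A^{*}(I-AA^{*})a=A^{*}b$, so $w_0$ solves the required equation, and expanding $|b|^2=|(I-AA^{*})a|^2$ gives $|w_0|^2-|Aw_0|^2+|b|^2=|a|^2-|A^{*}a|^2$. Thus $\|C_\varphi\|=\exp\big(\tfrac12(|w_0|^2-|Aw_0|^2+|b|^2)\big)$, and this is independent of the solution chosen, by the first paragraph.

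The step I expect to be the main obstacle is getting the Weyl identity exactly right: the translation built into $W_v$ must be matched against the linear part $A$, and it is this matching that forces the equation $(I-AA^{*})a=b$ and pins down the scalar $\lambda$; an error here produces a purported norm strictly below the kernel lower bound of the first paragraph, which is a useful consistency check. The remaining ingredients — unitarity of the $W_v$ and of the $C_{Uz}$, the tensor factorization of $\mathcal{F}_n$, and the diagonalization of $C_{sz}$ — are routine.
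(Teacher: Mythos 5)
Your proof is correct, but it follows a genuinely different route from the paper's. The paper does not prove Theorem~\ref{T:CMS_norm} directly: it deduces it as the finite-dimensional specialization of Theorem~\ref{T:bounded_COs}, whose proof runs entirely through positive semi-definite kernels --- the lower bound comes from the diagonal positivity $\Phi_M(z,z)\geq 0$ together with Proposition~\ref{Prop:quadratic_forms}, and the upper bound from the Schur-product argument showing that $G\cdot(\exp(F)-1)$ is a positive semi-definite kernel; the translation to the $w_0$-formulation is then Remark~\ref{R:characterize_v} plus the identity $\|(I-A^{*}A)^{1/2}w_0\|^2=\|w_0\|^2-\|Aw_0\|^2$. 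Your lower bound is essentially the same as the paper's (testing $C_\varphi^{*}$ on normalized kernels is exactly the diagonal condition of Lemma~\ref{Lemma:boundedCOs}), but your upper bound --- conjugating by Weyl operators to reduce to $C_{Az}$, then using the singular value decomposition and the tensor factorization $\mathcal{F}_n\cong\mathcal{F}_1^{\otimes n}$ to get $\|C_{Az}\|=1$ --- is in substance the original Carswell--MacCluer--Schuster argument, which the paper explicitly sets aside because the decomposition $A=UDV$ and the change-of-variables step do not survive the passage to infinite-dimensional $E$. What your approach buys is a concrete, self-contained computation with an explicit unitary intertwining (and the pleasant consistency check that the scalar $\lambda$ must meet the kernel lower bound exactly); what the paper's approach buys is uniformity --- one kernel-positivity argument that proves boundedness and the norm formula simultaneously and extends verbatim to arbitrary Hilbert spaces, where the range condition on $A^{*}b$ becomes genuinely stronger than the orthogonality condition you exploit. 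All the individual steps you flag as delicate (existence of $a$ with $(I-AA^{*})a=b$ from the orthogonality hypothesis, the Weyl identity $C_\varphi W_a=\lambda W_{A^{*}a}C_{Az}$, the value of the concave quadratic at a critical point, and the independence of the answer from the choice of $w_0$) check out.
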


Motivated by the above results, we study in this paper composition operators on the Segal-Bargmann space $\mathcal{H}(E)$ over an arbitrary Hilbert space $E$. The proof of Theorem \ref{T:CMS}  in \cite{CarswellSzeged2003} makes use of the change of variables and the fact that any $n\times n$ matrix $A$ can be written in the form $A=UDV$ for unitary matrices $U, V$ and a diagonal matrix $D$. Since this approach relies heavily on the finiteness of the dimension of $E$, it does not seem to work when the dimension of $E$ is infinite. It turns out that there is an alternative approach, based on the theory of reproducing kernels. This idea appeared in E. Nordgren's work \cite{Nordgren1978} and it was employed in \cite{JuryAMS2007}, where M. Jury proved the boundedness of composition operators on the Hardy and Bergman spaces of the unit disk without using Littlewood Subordination Principle. We will see that $C_{\varphi}$ is bounded if and only if $\varphi$ is an affine map as in Theorem \ref{T:CMS} but we need a stronger condition on the vector $b$ when $E$ is an infinite dimensional Hilbert space (in the case $E$ has finite dimension, our condition on $b$ is equivalent to the condition in Theorem \ref{T:CMS}). In the course of proving boundedness, we also obtain a formula for $\|C_{\varphi}\|$. Our formula is stated in a different way and it agrees with \eqref{Eqn:CMS_norm} when $E=\mathbb{C}^n$. For the compactness of $C_{\varphi}$, besides the condition that $\varphi(z)=Az+b$ for some linear operator $A$ on $E$ with $\|A\|<1$, it is also necessary that $A$ be a compact operator (this condition is of course superfluous when the dimension of $E$ is finite).

We now state our main results. The first result studies the boundedness and the norm formula for $C_{\varphi}$ on $\mathcal{H}(E)$. 

\begin{theorem}\label{T:bounded_COs} Let $\varphi$ be a mapping from $E$ into itself. Then the composition operator $C_{\varphi}$ is bounded on $\mathcal{H}(E)$ if and only if $\varphi(z)=Az+b$ for $z\in E$, where $A$ is a linear operator on $E$ with $\|A\|\leq 1$ and $A^{*}b$ belongs to the range of $(I-A^{*}A)^{1/2}$.

Furthermore, the norm of $C_{\varphi}$ is given by
\begin{equation}\label{Eqn:norm_formula}
\|C_{\varphi}\| = \exp\Big(\frac{1}{2}\|v\|^2+\frac{1}{2}\|b\|^2\Big),
\end{equation}
where $v$ is the unique vector in $E$ of minimum norm that satisfies the equation $A^{*}b=(I-A^{*}A)^{1/2}v$.
\end{theorem}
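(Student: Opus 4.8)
The plan is to reduce the whole question to the positivity of a single reproducing kernel, following the point of view of Nordgren and Jury. Write $K_y=K(\cdot,y)=\exp\langle\cdot,y\rangle$, so that the linear span of $\{K_y:y\in E\}$ is dense in $\mathcal H(E)$. If $C_\varphi$ is bounded then the reproducing property gives $C_\varphi^*K_y=K_{\varphi(y)}$ for every $y$; conversely, if the assignment $K_y\mapsto K_{\varphi(y)}$ extends to a bounded operator $T$ on $\mathcal H(E)$, then computing $(T^*h)(y)=\langle h,TK_y\rangle=h(\varphi(y))$ shows $T^*=C_\varphi$. Expanding $\|\sum_i a_iK_{\varphi(y_i)}\|^2$ and $\|\sum_i a_iK_{y_i}\|^2$ by means of $\langle K_a,K_b\rangle=\exp\langle b,a\rangle$, this says: $C_\varphi$ is bounded with $\|C_\varphi\|\le c$ if and only if the kernel $(x,y)\mapsto c^2K(x,y)-K(\varphi(x),\varphi(y))$ is positive semidefinite on $E$. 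Everything else is an analysis of this one condition.

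For the ``if'' direction together with the norm, suppose $\varphi(z)=Az+b$ with $\|A\|\le1$ and $A^*b=Qv$, where $Q=(I-A^*A)^{1/2}$. Expanding $\langle Ax+b,Ay+b\rangle$ and using $Q=Q^*$ yields, with $c^2=e^{\|v\|^2+\|b\|^2}$,
\[
c^2K(x,y)-K(\varphi(x),\varphi(y))
= e^{\|b\|^2}\,e^{\langle A^*Ax,y\rangle}
\bigl(e^{\|v\|^2}e^{\langle Qx,Qy\rangle}-e^{\langle Qx,v\rangle}e^{\langle v,Qy\rangle}\bigr).
\]
Now $e^{\langle A^*Ax,y\rangle}$ is a positive semidefinite kernel (the exponential of the positive form $\langle A^*A\cdot,\cdot\rangle$), and products of positive semidefinite kernels are again such, so it suffices to see that the last factor is positive semidefinite; evaluating its quadratic form at points $Qx_i$ and rewriting the exponentials as inner products in $\mathcal H(E)$, that form equals $\|K_v\|^2\|w\|^2-|\langle w,K_v\rangle|^2$ with $w=\sum_i\overline{a_i}K_{Qx_i}$, which is nonnegative by Cauchy--Schwarz. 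Hence $\|C_\varphi\|\le\exp(\tfrac12\|v\|^2+\tfrac12\|b\|^2)$ for every solution $v$ of $A^*b=Qv$, in particular for the one of least norm (unique, since the solution set is a closed affine subspace $v_0+\ker Q$). For the matching lower bound, test on a single kernel: $\|C_\varphi\|^2\ge\|K_{\varphi(y)}\|^2/\|K_y\|^2=\exp(\|\varphi(y)\|^2-\|y\|^2)$, and completing the square gives $\|\varphi(y)\|^2-\|y\|^2=\|v\|^2+\|b\|^2-\|Qy-v\|^2$; for the least-norm $v$ one has $v\perp\ker Q$, hence $v\in\operatorname{ran}Q^{-}$ (the closure), so the supremum of the right-hand side over $y\in E$ is exactly $\|v\|^2+\|b\|^2$, which yields \eqref{Eqn:norm_formula}.

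For the ``only if'' direction, assume $C_\varphi$ is bounded. Since every element of $\mathcal H(E)$ is a holomorphic function on $E$ (being a local uniform limit of exponentials), the map $y\mapsto K_y\in\mathcal H(E)$ is anti-holomorphic; hence so is $y\mapsto K_{\varphi(y)}=C_\varphi^*K_y$. Evaluating at $z\in E$ and taking a local logarithm of the nonvanishing holomorphic function $y\mapsto\overline{K_{\varphi(y)}(z)}=e^{\langle\varphi(y),z\rangle}$ shows that $y\mapsto\langle\varphi(y),z\rangle$ is holomorphic for every $z$, so $\varphi$ is holomorphic. The one-point instance of the positivity condition reads $\|\varphi(y)\|^2\le\|y\|^2+2\log\|C_\varphi\|$, i.e.\ $\varphi$ has linear growth, so Cauchy estimates applied to the homogeneous parts of the Taylor expansion of $\varphi$ annihilate every term of degree $\ge2$; thus $\varphi(z)=Az+b$ with $A$ a bounded operator. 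Replacing $y$ by $ty$ and letting $t\to\infty$ in $\|Ay+b\|^2\le\|y\|^2+2\log\|C_\varphi\|$ forces $\|A\|\le1$, while rewriting that inequality as $2\operatorname{Re}\langle y,A^*b\rangle-\|Qy\|^2\le 2\log\|C_\varphi\|-\|b\|^2$ (for all $y$) forces $A^*b\in\operatorname{ran}Q$, by the elementary lemma that for $Q\ge0$ one has $\sup_y\bigl(2\operatorname{Re}\langle y,w\rangle-\|Qy\|^2\bigr)<\infty$ precisely when $w\in\operatorname{ran}Q$ --- one direction from $w=Qv\Rightarrow 2\operatorname{Re}\langle Qy,v\rangle-\|Qy\|^2\le\|v\|^2$, the other by substituting $y_\varepsilon=\int_{\lambda>\varepsilon}\lambda^{-2}\,dE_\lambda\,w$ (with $E_\lambda$ the spectral resolution of $Q$) and letting $\varepsilon\downarrow0$.

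I expect the ``only if'' half to be the real obstacle: one has to produce the affine structure of $\varphi$, and with it the operator $A$, out of nothing but the boundedness of $C_\varphi$ (with no regularity assumed on $\varphi$ a priori), and then pin down the exact range condition on $A^*b$; both steps lean on the reproducing-kernel positivity and, for the range condition, on the spectral-theorem lemma above, in place of the $A=UDV$ normal form used by Carswell, MacCluer and Schuster in finite dimensions.
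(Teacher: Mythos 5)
Your argument is correct and follows the same overall strategy as the paper's proof: Nordgren's criterion that $\|C_\varphi\|\le c$ if and only if $(x,y)\mapsto c^2K(x,y)-K(\varphi(x),\varphi(y))$ is positive semi-definite, the affine form of $\varphi$ extracted from the one-point growth bound $\|\varphi(y)\|^2-\|y\|^2\le 2\log\|C_\varphi\|$ via Cauchy estimates on homogeneous parts, the range condition from the resulting quadratic inequality, and the norm from completing the square together with the observation that the minimal-norm solution $v$ lies in $\overline{\mathrm{ran}}\,(I-A^*A)^{1/2}$, so that $\inf_y\|Qy-v\|=0$. Two of your supporting lemmas differ from the paper's. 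First, you obtain $A^*b\in\mathrm{ran}\,Q$ (with $Q=(I-A^*A)^{1/2}$) from the spectral-theorem lemma that $\sup_y\bigl(2\mathrm{Re}\langle y,w\rangle-\|Qy\|^2\bigr)<\infty$ exactly when $w\in\mathrm{ran}\,Q$, whereas the paper (Proposition \ref{Prop:quadratic_forms}) derives the inequality $|\langle z,u\rangle|\le M\|Qz\|$ and produces a preimage by Hahn--Banach and Riesz representation; both are standard, and yours has the small advantage of exhibiting the preimage explicitly (do note, as part of the $\varepsilon\downarrow0$ limit, that boundedness of the supremum also forces the $\ker Q$-component of $w$ to vanish). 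Second, for the upper bound you verify positivity of the residual factor $e^{\|v\|^2}e^{\langle Qx,Qy\rangle}-e^{\langle Qx,v\rangle}e^{\langle v,Qy\rangle}$ by a direct Cauchy--Schwarz computation against $K_v$ in $\mathcal H(E)$, where the paper instead multiplies $G(z,w)=\exp\langle\varphi(z),\varphi(w)\rangle$ by $\exp(F)-1$ for the affine positive kernel $F$; the two computations are equivalent up to rearrangement, and yours is a pleasant shortcut.

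The one step you should tighten is the holomorphy of $\varphi$. From the anti-holomorphy of $y\mapsto K_{\varphi(y)}=C_\varphi^*K_y$ you correctly get that $y\mapsto e^{\langle\varphi(y),z\rangle}$ is holomorphic for each $z$, but ``taking a local logarithm'' only recovers $\langle\varphi(y),z\rangle$ up to an integer multiple of $2\pi i$ at each point, and since no continuity of $\varphi$ is assumed a priori you cannot yet conclude that this integer is locally constant. The repair is immediate and is exactly what the paper does: boundedness of $C_\varphi$ means $h\circ\varphi\in\mathcal H(E)$ for every $h\in\mathcal H(E)$, so taking $h=\langle\cdot,z\rangle$ gives directly that $\langle\varphi(\cdot),z\rangle\in\mathcal H(E)$ and hence is entire, with no logarithm needed. (Alternatively, your growth bound makes $\varphi$ locally bounded, so $\langle\varphi(y),z\rangle$ is the locally uniform limit of $t^{-1}\bigl(e^{t\langle\varphi(y),z\rangle}-1\bigr)$ as $t\to0$, which is holomorphic.) With that adjustment the proof is complete.
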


The second main result characterizes compact operators $C_{\varphi}$.
\begin{theorem}\label{T:cpt_COs}
Let $\varphi$ be a mapping from $E$ into itself. Then the composition operator $C_{\varphi}$ is compact on $\mathcal{H}(E)$ if and only if there is a compact linear operator $A$ on $E$ with $\|A\| <1$ and a vector $b\in E$ such that  $\varphi(z) = Az+b$ for all $z\in E$.
\end{theorem}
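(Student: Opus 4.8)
The plan is to show first that $C_{\varphi}$ being bounded forces $\varphi$ to be affine and $\|A\|\le 1$ (this is already Theorem~\ref{T:bounded_COs}), then to extract the two new necessary conditions $\|A\|<1$ and ``$A$ compact'', and finally to prove sufficiency by stripping off the translation part with a unitary and reducing the linear part to a diagonal operator.

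\emph{Necessity of $\|A\|<1$.} Since a compact operator is bounded, Theorem~\ref{T:bounded_COs} gives $\varphi(z)=Az+b$ with $\|A\|\le 1$. Consider the normalized reproducing kernels $k_x = e^{-\|x\|^2/2}K(\cdot,x)$. From $|\langle k_x,K(\cdot,y)\rangle| = e^{\operatorname{Re}\langle y,x\rangle-\|x\|^2/2}\le e^{\|y\|\,\|x\|-\|x\|^2/2}$ and density of the span of the kernels, $k_x\to 0$ weakly as $\|x\|\to\infty$; since $C_{\varphi}^{*}$ is also compact, $\|C_{\varphi}^{*}k_x\|\to 0$. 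Using $C_{\varphi}^{*}K(\cdot,x)=K(\cdot,\varphi(x))$ one computes $\|C_{\varphi}^{*}k_x\| = \exp(\tfrac12(\|\varphi(x)\|^2-\|x\|^2))$, so $\|\varphi(x)\|^2-\|x\|^2\to-\infty$ as $\|x\|\to\infty$. If $\|A\|=1$ this fails: pick unit vectors $\zeta_k$ with $1-1/k<\|A\zeta_k\|\le 1$, multiply each by a unimodular scalar so that $\langle A\zeta_k,b\rangle\ge 0$, and set $x_k=t_k\zeta_k$ with $t_k\to\infty$ chosen so that $t_k^2(1-\|A\zeta_k\|^2)$ stays bounded; then $\|\varphi(x_k)\|^2-\|x_k\|^2=-t_k^2(1-\|A\zeta_k\|^2)+2t_k\langle A\zeta_k,b\rangle+\|b\|^2$ is bounded below while $\|x_k\|\to\infty$, a contradiction. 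Hence $\|A\|<1$.

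\emph{Necessity of compactness of $A$.} Let $E_1=\overline{\operatorname{span}}\{\,z\mapsto\langle z,c\rangle:c\in E\,\}\subseteq\mathcal H(E)$ be the ``degree-one'' subspace and $P_1$ the orthogonal projection onto it. Since $(C_{\varphi}(\langle\cdot,c\rangle))(z)=\langle Az+b,c\rangle=\langle z,A^{*}c\rangle+\langle b,c\rangle$, the compression $P_1C_{\varphi}|_{E_1}$ sends $\langle\cdot,c\rangle$ to $\langle\cdot,A^{*}c\rangle$; under the conjugate-linear isometry $c\mapsto\langle\cdot,c\rangle$ of $E$ onto $E_1$ this compression is carried to $A^{*}$, so it has the same singular values as $A$. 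As $C_{\varphi}$ is compact so is this compression, and therefore $A$ is compact.

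\emph{Sufficiency.} Suppose $A$ is compact with $\|A\|<1$ and $b\in E$. Then $I-A^{*}A$ is invertible, so $A^{*}b\in\operatorname{ran}(I-A^{*}A)^{1/2}$ and $C_{\varphi}$ is bounded by Theorem~\ref{T:bounded_COs}. First I would remove $b$ using the Weyl (displacement) operators $W_c$, $(W_ch)(z)=e^{\langle z,c\rangle-\|c\|^2/2}h(z-c)$, which are unitary (this is checked directly on kernels, where $W_cK(\cdot,x)=e^{-\|c\|^2/2-\langle c,x\rangle}K(\cdot,x+c)$). A direct computation with the kernels, taking $c_1=-(I-A^{*}A)^{-1}A^{*}b$ and $c_2=b-Ac_1$, yields $W_{c_1}C_{\varphi}W_{c_2}=\lambda\,C_{A}$ for a positive scalar $\lambda$, where $C_{A}$ is composition with the linear map $z\mapsto Az$; hence $C_{\varphi}$ is compact iff $C_{A}$ is. Writing the polar decomposition $A=U|A|$ and using $C_{\psi\circ\phi}=C_\phi C_\psi$ gives $C_{A}=C_{|A|}C_{U}$ with $C_{U}$ bounded, so it suffices that $C_{|A|}$ is compact. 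Now $|A|=(A^{*}A)^{1/2}$ is positive and compact with $\||A|\|<1$; writing $|A|=\sum_j s_j\langle\cdot,e_j\rangle e_j$ with $\{e_j\}$ an orthonormal basis of $E$, $s_j\to 0$ and $\sup_j s_j<1$, and computing $C_{|A|}$ on the orthonormal basis of monomials attached to $\{e_j\}$, one finds $C_{|A|}$ is diagonal with eigenvalues $\prod_j s_j^{\alpha_j}$ over the finitely supported multi-indices $\alpha$. For each $\varepsilon>0$ the inequality $\prod_j s_j^{\alpha_j}\ge\varepsilon$ forces both the total degree $\sum_j\alpha_j$ and the support of $\alpha$ to be bounded, so it is satisfied by only finitely many $\alpha$; thus these eigenvalues tend to $0$ and $C_{|A|}$, hence $C_{\varphi}$, is compact.

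\emph{Main obstacle.} The heart of the matter is the sufficiency in infinite dimensions, where the finite-dimensional change-of-variables argument of \cite{CarswellSzeged2003} is unavailable: the two devices carrying the load are the reduction to $b=0$ by conjugating with the unitaries $W_c$ (the point being that $W_c$ is bounded although plain translation-composition is not) and the observation that, after a polar decomposition, the linear composition operator is diagonal in a monomial basis. On the necessity side the genuinely new phenomenon compared with Theorem~\ref{T:CMS} is that $A$ must be compact, and the delicate step there is to verify that the degree-one compression of $C_{\varphi}$ is precisely a copy of $A^{*}$. One also has to confirm that the identities $C_{\varphi}^{*}K(\cdot,x)=K(\cdot,\varphi(x))$, the unitarity of $W_c$, and $C_{\psi\circ\phi}=C_\phi C_\psi$ hold on $\mathcal H(E)$ with no domain pathologies.
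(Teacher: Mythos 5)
Your argument is correct, and both halves deviate from the paper's proof in instructive ways. For necessity, the paper also first invokes Theorem \ref{T:bounded_COs} and tests $C_{\varphi}$ against weakly null families, but it establishes compactness of $A$ \emph{first} (by applying $C_{\varphi}$ to $z\mapsto\langle z,u_m\rangle$ for a weakly null sequence $\{u_m\}$ in $E$ --- essentially the same content as your degree-one compression) and then rules out $\|A\|=1$ by producing an eigenvector $w$ of the compact operator $A^{*}A$ with eigenvalue $1$, for which $\langle Aw,b\rangle=0$ follows from the range condition on $A^{*}b$ and $\|C_{\varphi}^{*}(K_{rw}/\|K_{rw}\|)\|$ is constantly $\exp(\|b\|^2/2)$. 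Your phase-adjusted almost-norming vectors $\zeta_k$ accomplish the same thing without using compactness of $A$ at all, a genuine (if minor) simplification of the logical dependencies. For sufficiency the routes really differ: the paper factors $\varphi=\varphi_2\circ\varphi_1$ with $\varphi_1(z)=\alpha^{-1}Az$ and $\varphi_2(z)=\alpha z+b$ for $\|A\|<\alpha<1$, so that $C_{\varphi}=C_{\varphi_1}C_{\varphi_2}$ with $C_{\varphi_2}$ bounded, and then proves $C_{\varphi_1}$ compact from the basis-free identity $C_{B}=J^{-1}\big(1\oplus B^{*}\oplus(B^{*})^{\otimes 2}\oplus\cdots\big)J$ together with $\|(B^{*})^{\otimes m}\|=\|B\|^{m}$ and compactness of tensor powers of compact operators. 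Your Weyl-operator conjugation buys the sharper statement that $C_{Az+b}$ is a positive multiple of a unitary conjugate of $C_{Az}$ (a fact the paper never needs), and your polar-decomposition/diagonalization step is a concrete, coordinatized version of the paper's tensor-power computation. The only points you should nail down are that the pointwise formula for $W_c$ survives the extension from the kernel span to all of $\mathcal{H}(E)$ (it does, since norm convergence implies pointwise convergence), and that the monomials $(\alpha!)^{-1/2}\prod_j\langle\cdot,e_j\rangle^{\alpha_j}$ over finitely supported multi-indices form an orthonormal basis of $\mathcal{H}(E)$, which follows from the identification of $\mathcal{P}_m(E)$ with $E^{m}$ in Proposition \ref{Prop:Fock_realization}; neither is problematic.
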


\section{Compositions operators on $\mathcal{H}(E)$}\label{S:COs_Segal-Bargmann}
In the first part of this section we study the space $\mathcal{H}(E)$, where $E$ is an arbitrary Hilbert space. Since Gaussian measure is not available when $E$ is of infinite dimension, our approach here follows the same lines as the construction of the Drury-Arveson space given in \cite{Arveson1998}. In the second part of the section, we consider composition operators on $\mathcal{H}(E)$. Using kernel functions, we provide a criterion for the boundedness of these operators.

\subsection{The construction of $\mathcal{H}(E)$}\label{SS:Segal-Bargmann_space}

For each integer $m\geq 1$, we write $E^{m}$ for the symmetric tensor product of $m$ copies of $E$. We also define $E^{0}$ to be $\mathbb{C}$ with its usual inner product. We have $E^{1}=E$ and for $m\geq 2$, $E^{m}$ is a closed subspace of the full tensor product $E^{\otimes m}$ consisting of all vectors that are invariant under the natural action of the symmetric group $S_m$. The action of $S_m$ on $E^{\otimes m}$ is defined on elementary tensors by
\begin{equation*}
\pi\cdot(x_1\otimes\cdots\otimes x_m) = x_{\pi(1)}\otimes\cdots\otimes x_{\pi(m)}\text{ for } \pi\in S_m \text{ and } x_1,\ldots,x_m\in E.
\end{equation*}

For an element $z\in E$, we write $z^{m}=z\otimes\cdots\otimes z\in E^{m}$ for the $m$-fold tensor product of copies of $z$ (here $z^{0}$ denotes the number $1$ in $E^0=\mathbb{C}$). Each space $E^{m}$ is a Hilbert space with an inner product inherited from the inner product on $E$. We will generally write $\langle\cdot,\cdot\rangle$ for any inner product without referring to the space on which it is defined. The defining space will be clear from the context.

A continuous mapping $p: E\to\mathbb{C}$ is called a continuous $m$-homogeneous polynomial on $E$ if there exists an element $\zeta$ in $E^{m}$ such that $p(z)=\langle z^{m},\zeta\rangle$ for $z\in E$. A continuous mapping $f: E\to\mathbb{C}$ is called a polynomial if $f$ can be written as a finite sum of continuous homogeneous polynomials. In other words, there is an integer $m\geq 0$ and there are vectors $a_0\in\mathbb{C}, a_1\in E^{1},\ldots, a_{m}\in E^{m}$ such that
\begin{equation}\label{Eqn:polynomial}
f(z) = \sum_{j=0}^{m}\langle z^{j},a_j\rangle =  a_0 + \langle z,a_1\rangle + \cdots + \langle z^m,a_{m}\rangle.
\end{equation}

When $E=\mathbb{C}^n$ for some positive integer $n$, the notion of polynomials that we have just given coincides with the usual definition of polynomials in $n$ complex variables. In fact, each polynomial in $z=(z_1,\ldots,z_n)$ is a linear combination of monomials of the form $z_1^{j_1}\cdots z_n^{j_n}$ for non-negative integers $j_1,\ldots, j_n$. Let $\{e_1,\ldots,e_n\}$ denote the standard basis for $\mathbb{C}^n$, where $e_k=(0,\ldots,0,1,0,\ldots)$ with the number $1$ in the $k$th component. Then
\begin{align*}
z_1^{j_1}\cdots z_n^{j_n} & = \langle z,e_1\rangle^{j_1}\cdots\langle z,e_n\rangle^{j_n} \\
& = \langle z^{l}, e_1^{\otimes j_1}\otimes\cdots\otimes e_n^{\otimes j_n}\rangle_{E^{\otimes l}} = \langle z^l,a_l\rangle_{E^{l}},
\end{align*}
where $l=j_1+\cdots+j_n$ and $a_l$ is the orthogonal projection of $e_1^{\otimes j_1}\otimes\cdots\otimes e_n^{\otimes j_n}$ on $E^l$. This shows that any polynomial in the variables $z_1,\ldots, z_n$ can be written in the form \eqref{Eqn:polynomial}.

We denote by $\mathcal{P}_{n}(E)$ the space of all continuous $n$-homogeneous polynomials and $\mathcal{P}(E)$ the space of all continuous polynomials on $E$. For more detailed discussions of polynomials between Banach spaces and locally convex spaces, see \cite{DineenSpringer1999,Mujica1986}.

For two continuous polynomials $f,g$ in $\mathcal{P}(E)$, we can find an integer $m\geq 0$ and vectors $a_j, b_j\in E^{j}$ for $0\leq j\leq m$ such that $f(z)=\sum_{j=0}^{m}\langle z^j,a_j\rangle$ and $g(z)=\sum_{j=0}^{m}\langle z^j,b_j\rangle$. We define
\begin{align}\label{Eqn:inner_product}
\langle f,g\rangle = \sum_{j=0}^{m}j!\ \langle b_j,a_j\rangle.
\end{align}
It can be checked that \eqref{Eqn:inner_product} defines an inner product on $\mathcal{P}(E)$. We denote by $\mathcal{H}(E)$ the completion of $\mathcal{P}(E)$ in the norm induced by this inner product.

There is a natural anti-unitary operator from $\mathcal{H}(E)$ onto the symmetric (boson) Fock space $\mathcal{F}(E) = E^{0}\oplus E^{1}\oplus E^2\oplus \cdots$, where the sum denotes the infinite direct sum of Hilbert spaces. We skip the proof which is straightforward from the definition of $\mathcal{H}(E)$ and $\mathcal{F}(E)$.

\begin{proposition}\label{Prop:Fock_realization}
For each element $f\in\mathcal{P}(E)$ given by formula \eqref{Eqn:polynomial}, we define an element in $\mathcal{F}(E)$ by
\begin{equation*}
Jf = (a_0, \sqrt{1!}\ a_1, \sqrt{2!}\ a_2, \sqrt{3!}\ a_3,\ldots),
\end{equation*}
where $a_j=0$ for $j>m$. Then $J$ is an anti-unitary from $\mathcal{P}_m(E)$ onto $E^{m}$ for each $m\geq 0$ and it extends uniquely to an anti-unitary operator from $\mathcal{H}(E)$ onto $\mathcal{F}(E)$. 
\end{proposition}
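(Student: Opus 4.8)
The plan is to verify directly that $J$ is well defined, conjugate-linear, and inner-product-reversing on the polynomial algebra $\mathcal{P}(E)$, and then to invoke the standard fact that a densely defined conjugate-linear isometry with dense range extends uniquely to an anti-unitary operator between the completions.

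The first point to settle is that the representation \eqref{Eqn:polynomial} of a polynomial $f$ is unique, so that the formula defining $Jf$ does not depend on any choice. For this I would first record that the linear span of $\{z^{m}:z\in E\}$ is dense in $E^{m}$: by the polarization identity the symmetrization of an elementary tensor $x_{1}\otimes\cdots\otimes x_{m}$ is a finite linear combination of vectors of the form $(\epsilon_{1}x_{1}+\cdots+\epsilon_{m}x_{m})^{m}$ with $\epsilon_{i}\in\{1,-1\}$, and such symmetrizations are the images of the elementary tensors under the orthogonal projection of $E^{\otimes m}$ onto $E^{m}$, hence span a dense subspace of $E^{m}$. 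Consequently, if $\zeta\in E^{m}$ satisfies $\langle z^{m},\zeta\rangle=0$ for all $z\in E$, then $\zeta=0$. To separate the homogeneous pieces of $f$, replace $z$ by $tz$ with $t\in\mathbb{C}$: since $(tz)^{j}=t^{j}z^{j}$, the identity $\sum_{j=0}^{m}t^{j}\langle z^{j},a_{j}\rangle=f(tz)$ is a polynomial identity in $t$, so each $\langle z^{j},a_{j}\rangle$ is determined by $f$, and then the previous remark forces each $a_{j}$ to be determined by $f$. Thus $J$ is well defined on $\mathcal{P}(E)$; it is plainly additive, and since $\lambda\langle z^{j},a_{j}\rangle=\langle z^{j},\bar\lambda a_{j}\rangle$ it is conjugate-linear.

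Next I would check that $J$ maps $\mathcal{P}_{m}(E)$ onto $E^{m}$ and reverses inner products. Surjectivity onto $E^{m}$ is immediate: given $\zeta\in E^{m}$, the polynomial $z\mapsto\langle z^{m},\zeta\rangle$ lies in $\mathcal{P}_{m}(E)$ and is sent by $J$ to $\sqrt{m!}\,\zeta$. For the inner products, if $f(z)=\sum_{j}\langle z^{j},a_{j}\rangle$ and $g(z)=\sum_{j}\langle z^{j},b_{j}\rangle$, then using the definition \eqref{Eqn:inner_product} and the orthogonality of the summands of $\mathcal{F}(E)$ one computes
\begin{equation*}
\langle Jf,Jg\rangle_{\mathcal{F}(E)}=\sum_{j=0}^{m}\big\langle\sqrt{j!}\,a_{j},\sqrt{j!}\,b_{j}\big\rangle=\sum_{j=0}^{m}j!\,\langle a_{j},b_{j}\rangle=\overline{\sum_{j=0}^{m}j!\,\langle b_{j},a_{j}\rangle}=\overline{\langle f,g\rangle_{\mathcal{H}(E)}}.
\end{equation*}
In particular $\|Jf\|=\|f\|$, so $J$ is a conjugate-linear isometry on $\mathcal{P}(E)$ that restricts to an anti-unitary from $\mathcal{P}_{m}(E)$ onto $E^{m}$ for each $m\geq 0$.

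Finally, $\mathcal{P}(E)$ is dense in $\mathcal{H}(E)$ by construction, and the range of $J$ contains every finitely supported sequence in $E^{0}\oplus E^{1}\oplus\cdots$ (for $(c_{0},\ldots,c_{N},0,0,\ldots)$ take $f(z)=\sum_{j=0}^{N}\langle z^{j},c_{j}/\sqrt{j!}\rangle$), which is dense in $\mathcal{F}(E)$. A conjugate-linear isometry defined on a dense subspace is uniformly continuous and hence extends uniquely to a conjugate-linear isometry of the completions; its range is then closed and dense, hence all of $\mathcal{F}(E)$, so the extension is the desired anti-unitary $J\colon\mathcal{H}(E)\to\mathcal{F}(E)$. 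The only step that requires a genuine idea rather than bookkeeping is the density of $\mathrm{span}\{z^{m}:z\in E\}$ in $E^{m}$ via polarization; everything else is routine.
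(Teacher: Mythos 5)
Your proof is correct. The paper in fact omits the proof entirely (``We skip the proof which is straightforward from the definition of $\mathcal{H}(E)$ and $\mathcal{F}(E)$''), and your argument supplies exactly the routine verifications intended, while correctly isolating and handling the one point that genuinely needs an idea: the well-definedness of $J$, which you reduce to the density of $\mathrm{span}\{z^{m}: z\in E\}$ in $E^{m}$ via the polarization identity, together with the scaling argument $z\mapsto tz$ to separate homogeneous components.
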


As in the case of the Drury-Arveson space, we can realize the elements of $\mathcal{H}(E)$ in more concrete terms, as entire functions on $E$.

\begin{proposition}\label{Prop:analytic_realization}
Each element $f$ in $\mathcal{H}(E)$ can be identified as an entire function on $E$ having a power expansion of the form
\begin{equation*}
f(z) = \sum_{j=0}^{\infty}\langle z^j,a_j\rangle \text{ for all } z\in E,
\end{equation*}
where $a_0\in\mathbb{C}$, $a_1\in E$, $a_2\in E^2,\ldots$. Furthermore, $\|f\|^2 = \sum_{j=0}^{\infty}j!\|a_j\|^2$.

Conversely, if $\sum_{j=0}^{\infty}j!\|a_j\|^2<\infty$, then the power series $\sum_{j=0}^{\infty}\langle z^j,a_j\rangle$ defines an element in $\mathcal{H}(E)$.
\end{proposition}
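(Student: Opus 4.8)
The plan is to transport everything through the anti-unitary $J$ of Proposition \ref{Prop:Fock_realization}. An element $f\in\mathcal{H}(E)$ corresponds to a sequence $(c_0,c_1,c_2,\ldots)\in\mathcal{F}(E)=\bigoplus_j E^j$ with $\sum_j\|c_j\|^2<\infty$; writing $c_j=\sqrt{j!}\,a_j$, the condition becomes $\sum_j j!\|a_j\|^2<\infty$, which already pins down the Hilbert-space norm formula $\|f\|^2=\sum_j j!\|a_j\|^2$ once we know the identification is isometric — and it is, because $J$ is anti-unitary and the map $(c_j)\mapsto(a_j)$ is the obvious rescaling. So the genuine content is the \emph{analytic} identification: showing that the formal series $f(z)=\sum_{j=0}^\infty\langle z^j,a_j\rangle$ converges for every $z\in E$, defines an entire function, and that this assignment is consistent with the inclusion $\mathcal{P}(E)\hookrightarrow\mathcal{H}(E)$ (i.e.\ a polynomial gets sent to itself) and is injective.

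The key estimate is the reproducing-kernel bound. For $z\in E$ fixed, the truncated polynomials $f_N(z)=\sum_{j=0}^N\langle z^j,a_j\rangle$ satisfy, by Cauchy--Schwarz in each summand,
\begin{equation*}
|f_N(z)-f_M(z)| \le \sum_{j=M+1}^N \|z^j\|\,\|a_j\| = \sum_{j=M+1}^N \|z\|^j\,\|a_j\|,
\end{equation*}
using $\|z^j\|_{E^j}=\|z\|^j$. Then one more Cauchy--Schwarz over the index $j$ gives
\begin{equation*}
\Big(\sum_{j=M+1}^N \|z\|^j\|a_j\|\Big)^2 \le \Big(\sum_{j=M+1}^N \frac{\|z\|^{2j}}{j!}\Big)\Big(\sum_{j=M+1}^N j!\,\|a_j\|^2\Big) \le e^{\|z\|^2}\sum_{j=M+1}^\infty j!\,\|a_j\|^2,
\end{equation*}
which tends to $0$ as $M\to\infty$, uniformly for $z$ in any bounded subset of $E$. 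Hence $f_N$ converges uniformly on bounded sets to a function $f$, and since each $f_N$ is a (continuous) polynomial, the limit $f$ is entire on $E$ (it is a locally uniform limit of polynomials; alternatively one checks it has a convergent power series expansion of the required form at every point via the estimate applied to centered expansions). The same computation gives $|f(z)|\le e^{\|z\|^2/2}\|f\|_{\mathcal{H}(E)}$, which is the reproducing-kernel pointwise bound and in particular shows the map $f\mapsto(\text{function }z\mapsto f(z))$ is well defined on all of $\mathcal{H}(E)$ by density.

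For injectivity and consistency: the polynomials are dense, and on a polynomial the series is finite and reproduces the polynomial itself, so the map extends the tautological inclusion. If $f\in\mathcal{H}(E)$ vanishes identically as a function, then restricting to finite-dimensional subspaces $F\subseteq E$ and using that $\mathcal{H}(F)$ sits inside $\mathcal{H}(E)$ compatibly (this is essentially $\mathcal{F}_{\dim F}$, where the classical theory applies), one concludes all Taylor coefficients vanish; since the $a_j$ are determined by these coefficients on every finite-dimensional slice and $E^j$ is generated by the $z^j$, one gets $a_j=0$ for all $j$, hence $f=0$ in $\mathcal{H}(E)$. The converse direction of the proposition is immediate: given $(a_j)$ with $\sum j!\|a_j\|^2<\infty$, the vector $(\sqrt{j!}\,a_j)$ lies in $\mathcal{F}(E)$, so $J^{-1}$ of it is an element of $\mathcal{H}(E)$ whose associated function is exactly the given series.

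The main obstacle I expect is not any single estimate — the convergence bound above is routine — but rather pinning down \emph{injectivity} of the function representation cleanly: one must argue that the abstract symmetric-tensor data $(a_j)$ is faithfully recorded by the pointwise values $z\mapsto f(z)$. The neatest route is the polarization/restriction argument sketched above (reduce to finite dimensions, where $\mathcal{H}(E)=\mathcal{F}_n$ and monomials form a basis), together with the observation that an element of $E^j$ orthogonal to every $z^j$ ($z\in E$) must be zero, since $\{z^j : z\in E\}$ spans a dense subspace of the symmetric tensor power $E^j$.
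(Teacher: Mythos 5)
Your proposal is correct and follows essentially the same route as the paper: identify $\mathcal{H}(E)$ with the Fock space via $J$, then use the double Cauchy--Schwarz estimate $\sum_j\|z\|^j\|a_j\|\le e^{\|z\|^2/2}\|f\|$ to get uniform convergence on bounded sets and hence entirety, with the converse handled by the same isometric identification (the paper phrases it as the partial sums being Cauchy in $\mathcal{H}(E)$). The only difference is that you explicitly address injectivity of the function representation, which the paper leaves implicit; your polarization/scaling argument for that point is sound.
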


\begin{proof} By Proposition \ref{Prop:Fock_realization}, each element $f$ has a formal power series of the form
\begin{equation}\label{Eqn:power_expansion}
f(z) = \sum_{j=0}^{\infty}\langle z^m,a_m\rangle,
\end{equation}
where $a_j$ belongs to $E^j$ for $j\geq 0$ and $\sum_{j=0}^{\infty}j!\ \|a_j\|^2 = \|f\|^2<\infty.$ For any $z\in E$, since $\|z^m\| = \|z\|^m$, we have
\begin{align*}
\sum_{j=0}^{\infty}|\langle z^j,a_j\rangle| & \leq \sum_{j=0}^{\infty}\|z^j\|\|a_j\| = \sum_{j=0}^{\infty}\|z\|^j\|a_j\|= \sum_{j=0}^{\infty} \frac{\|z\|^j}{\sqrt{j!}}\ \sqrt{j!}\ \|a_j\|\\
& \leq \Big(\sum_{j=0}^{\infty}\frac{\|z\|^{2j}}{j!}\Big)^{1/2}\Big(\sum_{j=0}^{\infty} j!\ \|a_j\|^2\Big)^{1/2} = \exp(\|z\|^2/2)\|f\|.
\end{align*}
This shows that the power series \eqref{Eqn:power_expansion} converges uniformly on any bounded ball in $E$. It follows that $f$ can be considered as an entire function on $E$.

The converse follows from the fact that the sequence of polynomials $\{p_{m}\}_{m=1}^{\infty}$ defined by $p_m(z)=\sum_{j=0}^{m}\langle z^j,a_j\rangle$ for $m=1,2,\ldots$, is a Cauchy sequence in $\mathcal{H}(E)$.
\end{proof}

For $w$ in $E$, put $$K_w(z)=\exp(\langle z,w\rangle) = \sum_{j=0}^{\infty} \frac{1}{j!}\langle z,w\rangle^j = \sum_{j=0}^{\infty}\Big\langle z^n,\frac{w^j}{j!}\Big\rangle\text{ for } z\in E.$$
By Proposition \ref{Prop:analytic_realization}, $K_w$ belongs to $\mathcal{H}(E)$. For any $f$ given by \eqref{Eqn:power_expansion}, we have
\begin{equation*}
\langle f, K_w\rangle = \sum_{j=0}^{\infty} j!\Big\langle \frac{w^j}{j!},a_j\Big\rangle = f(w)\text{ for } w\in E.
\end{equation*}
Therefore, the function $K(z,w)=K_w(z)$ for $z,w\in E$ is the reproducing kernel function for $\mathcal{H}(E)$. Furthermore, the linear span of the set $\{K_w: w\in E\}$ is dense in $\mathcal{H}(E)$. This shows that $\mathcal{H}(E)$ is a reproducing kernel Hilbert space. For a general theory of these spaces, see, for example, \cite{AronszajnTAMS1950} or \cite[Chapter 2]{AglerAMS2002}.

\begin{remark}
The space $\mathcal{H}(E)$ can be defined in an abstract way by the kernel function $K(z,w)$. However it is not clear from the abstract definition why $\mathcal{H}(E)$ consists of the power series given in Proposition \ref{Prop:analytic_realization}. We have chosen a more concrete construction to exhibit the decomposition
\begin{equation}\label{Eqn:SB_decomposition}
   \mathcal{H}(E)=\bigoplus_{m\geq 0}\mathcal{P}_m(E) = \mathbb{C}\oplus\mathcal{P}_1(E)\oplus\mathcal{P}_2(E)\oplus\cdots,
\end{equation}
which will be useful for us later.
\end{remark}

When $E=\mathbb{C}^n$ for some positive integer $n$, the space $\mathcal{H}(\mathbb{C}^n)$ coincides with $\mathcal{F}_n$, which we discuss in the Introduction.

The following facts are well known in the case $E=\mathbb{C}^n$ and they continue to hold for arbitrary Hilbert space $E$. We skip the proofs, which make use of the fact that a sequence in a reproducing kernel Hilbert space is weakly convergent if and only if it is bounded in norm and it converges point-wise.
\begin{lemma}\label{L:weak_convergence}
The following statements hold in $\mathcal{H}(E)$.
\begin{itemize}
\item[(a)] $\displaystyle\lim_{\|z\|\to\infty}\|K_z\|^{-1}K_z=0$ weakly in $\mathcal{H}(E)$.
\item[(b)] Let $\{u_{m}\}$ be a sequence converging weakly to $0$ in $E$  (in particular, $\{u_m\}$ is bounded). For each $m$, put $f_{m}(z)=\langle z,u_{m}\rangle$ for $z\in E$. Then $\lim_{m\to\infty}f_{m} = 0$ weakly in $\mathcal{H}(E)$.
\end{itemize}
\end{lemma}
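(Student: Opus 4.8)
The plan is to invoke the standard criterion for weak convergence in a reproducing kernel Hilbert space: a norm-bounded net $\{g_\alpha\}$ in $\mathcal{H}(E)$ converges weakly to $0$ as soon as $g_\alpha(w)\to 0$ for every $w\in E$. This is precisely the direction needed here, and I would derive it from the density of $\operatorname{span}\{K_w:w\in E\}$ in $\mathcal{H}(E)$ (recorded above): given $h\in\mathcal{H}(E)$ and $\varepsilon>0$, choose $h'=\sum_{i=1}^N c_iK_{w_i}$ with $\|h-h'\|$ small and estimate
\[
|\langle g_\alpha,h\rangle| \le \|g_\alpha\|\,\|h-h'\|+\sum_{i=1}^N|c_i|\,|\langle g_\alpha,K_{w_i}\rangle| = \|g_\alpha\|\,\|h-h'\|+\sum_{i=1}^N|c_i|\,|g_\alpha(w_i)|;
\]
the first term is controlled by $\sup_\alpha\|g_\alpha\|$ and the second tends to $0$. (The reverse implication, which uses Banach--Steinhaus together with $\langle g_\alpha,K_w\rangle=g_\alpha(w)$, is not needed; alternatively one may simply cite \cite{AronszajnTAMS1950} or \cite[Chapter 2]{AglerAMS2002}.) With this reduction in hand, each part comes down to a norm bound plus pointwise decay.

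For part (a) I would first compute $\|K_z\|^2=\langle K_z,K_z\rangle=K_z(z)=\exp(\|z\|^2)$, so every normalized kernel $\|K_z\|^{-1}K_z$ has norm exactly $1$ and the boundedness is immediate. For the pointwise decay, fix $w\in E$ and apply Cauchy--Schwarz in the exponent:
\[
\big|\,\|K_z\|^{-1}K_z(w)\,\big| = \exp\big(-\tfrac12\|z\|^2\big)\,\big|\exp(\langle w,z\rangle)\big| \le \exp\big(-\tfrac12\|z\|^2+\|w\|\,\|z\|\big),
\]
which tends to $0$ as $\|z\|\to\infty$; the criterion then gives $\|K_z\|^{-1}K_z\to 0$ weakly.

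For part (b), since $\{u_m\}$ converges weakly in $E$ it is bounded, say $\|u_m\|\le M$. Each $f_m(z)=\langle z,u_m\rangle$ is a continuous $1$-homogeneous polynomial, and by the definition of the inner product \eqref{Eqn:inner_product} one has $\|f_m\|^2=1!\,\|u_m\|^2\le M^2$, so $\{f_m\}$ is norm-bounded in $\mathcal{H}(E)$. For the pointwise decay, fix $w\in E$; then $f_m(w)=\langle w,u_m\rangle\to 0$ since $u_m\to 0$ weakly in $E$. The criterion then yields $f_m\to 0$ weakly in $\mathcal{H}(E)$.

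I expect no genuine obstacle here: the only step with any real content is the RKHS weak-convergence criterion, and given the density statement already available, even that is routine; the remainder is two short computations of norms and exponential estimates.
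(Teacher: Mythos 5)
Your proof is correct and follows exactly the route the paper indicates (the paper omits the proof but states that it rests on the fact that a norm-bounded sequence in a reproducing kernel Hilbert space converging pointwise to $0$ converges weakly to $0$). The norm computations $\|K_z\|^2=\exp(\|z\|^2)$ and $\|f_m\|^2=\|u_m\|^2$ and the two pointwise-decay estimates are all accurate, so nothing is missing.
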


\subsection{Composition operators}

For any mapping $\varphi$ from $E$ into itself, we recall that the composition operator $C_{\varphi}$ is defined by $C_{\varphi}h = h\circ\varphi$ for all $h$ in $\mathcal{H}(E)$ for which $h\circ\varphi$ also belongs to $\mathcal{H}(E)$. Since $C_{\varphi}$ is a closed operator, it follows from the closed graph theorem that $C_{\varphi}$ is bounded if and only if $h\circ\varphi$ belongs to $\mathcal{H}(E)$ for all $h\in\mathcal{H}(E)$. 

Now suppose that $C_{\varphi}$ is a bounded operator on $\mathcal{H}(E)$. A priori we do not impose any condition $\varphi$ but it follows from the boundedness of $C_{\varphi}$ that $\varphi$ must be an entire function (at least in the weak sense). In fact, for any $a\in E$, the function $\langle \varphi(\cdot),a\rangle = C_{\varphi}(\langle\cdot,a\rangle)$ belongs to $\mathcal{H}(E)$, hence it is entire on $E$ by Proposition \ref{Prop:analytic_realization}. 

For $z\in E$ and $h\in\mathcal{H}(E)$, since $\langle h,C_{\varphi}^{*}K_z\rangle=\langle C_{\varphi} h,K_z\rangle = h(\varphi(z)) = \langle h,K_{\varphi(z)}\rangle,$ we obtain the well known formula
\begin{equation}\label{Eqn:adjointFormula}
C_{\varphi}^{*} K_z = K_{\varphi(z)}.
\end{equation}
This formula was used in \cite{CarswellSzeged2003} for the proof of the necessity of Theorem \ref{T:CMS}. It turns out that the formula plays an important role in our proof of both the necessity and sufficiency on the boundedness of $C_{\varphi}$.

Let $\mathcal{M}$ denote the linear span of the kernel functions $\{K_{z}: z\in E\}$. We already know that $\mathcal{M}$ is dense in $\mathcal{H}(E)$. Motivated by \eqref{Eqn:adjointFormula}, for any mapping $\varphi$ (even when $C_{\varphi}$ is not a bounded operator on $\mathcal{H}(E)$), we define a linear operator $S_{\varphi}$ with domain $\mathcal{M}$ by the formula
\begin{equation*}
S_{\varphi}\big(\sum_{j=1}^{m}c_jK_{x_j}\big) = \sum_{j=1}^{m}c_jK_{\varphi(x_j)},
\end{equation*}
for distinct elements $x_1,\ldots, x_m$ in $E$ and any complex numbers $c_1,\ldots, c_m$. The operator $S_{\varphi}$ is well defined since the kernel functions $K_{x_1},\ldots, K_{x_m}$ are linearly independent. It follows from \eqref{Eqn:adjointFormula} that if $C_{\varphi}$ is bounded on $\mathcal{H}(E)$, then $S_{\varphi} = C_{\varphi}^{*}$ on $\mathcal{M}$ and hence $S_{\varphi}$ extends to a bounded operator on $\mathcal{H}(E)$. On the other hand, if $S_{\varphi}$ extends to a bounded operator on $\mathcal{H}(E)$, then since $$(C_{\varphi}h)(z) = h(\varphi(z)) =\langle h, K_{\varphi(z)}\rangle = \langle h,S_{\varphi}K_z\rangle = (S_{\varphi}^{*}h)(z)$$ for all $h\in\mathcal{H}(E)$ and all $z\in E$, we conclude that $C_{\varphi}=S_{\varphi}^{*}$ and hence $C_{\varphi}$ is also a bounded operator. It turns out, with the help of kernel functions, that it is more convenient for us to work with $S_{\varphi}$ than with $C_{\varphi}$ directly.

For elements $x_1,\ldots, x_m$ in $E$ and complex numbers $c_1,\ldots, c_m$, since
\begin{align*}
\big\|S_{\varphi}(\sum_{j=1}^{m}c_jK_{x_j})\big\|^2 & = \sum_{j,l}\overline{c}_l c_j\langle K_{\varphi(x_j)},K_{\varphi(x_l)}\rangle = \sum_{j,l}\overline{c}_l c_j K(\varphi(x_l),\varphi(x_j)),
\end{align*}
and
\begin{align*}
\big\|\sum_{j=1}^{m} c_jK_{x_j}\big\|^2 & = \sum_{j,l=1}^{m}\overline{c}_l c_j K(x_l,x_j),
\end{align*}
we see that $S_{\varphi}$ is bounded with $\|S_{\varphi}\|\leq M$ if and only if
\begin{equation}\label{Eqn:semi-definiteness}
\sum_{j,l=1}^{m}c_j\overline{c}_l\big(M^2 K(x_l,x_j) - K(\varphi(x_l),\varphi(x_j))\big)\geq 0.
\end{equation}
Put $\Phi_{M}(z,w) = M^2 K(z,w)- K(\varphi(z),\varphi(w))$ for $z,w\in E$. Since \eqref{Eqn:semi-definiteness} holds for arbitrary $x_1,\ldots, x_m$ in $E$ and arbitrary complex numbers $c_1,\ldots, c_m$, we say that $\Phi_{M}$ is a positive semi-definite kernel on $E$ (in Section \ref{S:boundedCOs} we will discuss more about these kernels). Therefore, $S_{\varphi}$ (and hence, $C_{\varphi}$) is bounded with norm at most $M$ if and only if $\Phi_{M}$ is a positive semi-definite kernel. This criterion for boundedness of composition operators on general reproducing kernel Hilbert spaces was obtained by Nordgren in \cite[Theorem 2]{Nordgren1978}.

Using the formula $K(z,w)=\exp(\langle z,w\rangle)$, we have

\begin{lemma}\label{Lemma:boundedCOs}
For any mapping $\varphi$ from $E$ into itself, the composition operator $C_{\varphi}$ is bounded on $\mathcal{H}(E)$ with norm at most $M$ if and only if the function $$\Phi_{M}(z,w)= M^2\exp(\langle z,w\rangle) - \exp(\langle\varphi(z),\varphi(w)\rangle)$$ is positive semi-definite.

In particular, we have $\Phi_{M}(z,z)\geq 0$, which is equivalent to 
\begin{align*}
M^2\exp(\|z\|^2) \geq \exp(\|\varphi(z)\|^2) \Longleftrightarrow 2\ln M \geq\|\varphi(z)\|^2 - \|z\|^2,
\end{align*}
for all $z\in E$.
\end{lemma}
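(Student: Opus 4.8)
The plan is to read the lemma off directly from the discussion immediately preceding it, since essentially all of the work has already been done there. First I would recall the two reductions established above: (i) $C_{\varphi}$ is bounded on $\mathcal{H}(E)$ with $\|C_{\varphi}\|\le M$ if and only if the operator $S_{\varphi}$ on $\mathcal{M}$ extends to a bounded operator on $\mathcal{H}(E)$ with $\|S_{\varphi}\|\le M$ — because $S_{\varphi}=C_{\varphi}^{*}$ on $\mathcal{M}$ whenever $C_{\varphi}$ is bounded, and conversely $C_{\varphi}=S_{\varphi}^{*}$ whenever $S_{\varphi}$ is bounded, so the two operators have the same norm; and (ii) computing $\big\|S_{\varphi}(\sum_j c_jK_{x_j})\big\|^2$ and $\big\|\sum_j c_jK_{x_j}\big\|^2$ via the reproducing property, the estimate $\|S_{\varphi}\|\le M$ is equivalent to the inequality \eqref{Eqn:semi-definiteness} holding for all finite families $x_1,\ldots,x_m\in E$ and $c_1,\ldots,c_m\in\mathbb{C}$, which is precisely the assertion that the kernel $\Phi_M(z,w)=M^2K(z,w)-K(\varphi(z),\varphi(w))$ is positive semi-definite.

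The next step is purely a substitution: inserting the explicit Segal--Bargmann kernel $K(z,w)=\exp(\langle z,w\rangle)$ into the expression for $\Phi_M$ gives $\Phi_M(z,w)=M^2\exp(\langle z,w\rangle)-\exp(\langle\varphi(z),\varphi(w)\rangle)$, which is the first assertion verbatim.

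For the ``in particular'' clause I would specialize the positive semi-definiteness condition to the single-point case, taking $m=1$, $x_1=z$, $c_1=1$; this forces $\Phi_M(z,z)\ge 0$. Since $\langle z,z\rangle=\|z\|^2$ and $\langle\varphi(z),\varphi(z)\rangle=\|\varphi(z)\|^2$, this reads $M^2\exp(\|z\|^2)\ge\exp(\|\varphi(z)\|^2)$, and taking logarithms of both (positive) sides yields the equivalent inequality $2\ln M\ge\|\varphi(z)\|^2-\|z\|^2$ for every $z\in E$.

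I do not expect any genuine obstacle here; the substance of the lemma was already assembled in the paragraphs above, and what remains is bookkeeping. The only point deserving a word of care is that ``positive semi-definite'' is being used in the strong sense that the defining inequality holds for \emph{arbitrary} finite collections of points and scalars, so the diagonal inequality $\Phi_M(z,z)\ge 0$ is a legitimate consequence (the $m=1$ instance) rather than an extra hypothesis.
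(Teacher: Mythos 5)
Your proposal is correct and takes essentially the same route as the paper: the lemma is obtained there exactly as you describe, by combining the preceding reduction to $S_{\varphi}$ (Nordgren's criterion that $\|S_{\varphi}\|\leq M$ is equivalent to positive semi-definiteness of $M^2K(z,w)-K(\varphi(z),\varphi(w))$) with the substitution $K(z,w)=\exp(\langle z,w\rangle)$, and the diagonal inequality is just the one-point case as you note. No gaps.
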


In Section \ref{S:boundedCOs}, we discuss in more detail positive semi-definite kernels and find conditions on $\varphi$ under which the function $\Phi_{M}$ above is positive semi-definite. Using these conditions we obtain a proof of Theorem \ref{T:bounded_COs}.

\section{Boundedness of composition operators on $\mathcal{H}(E)$}
\label{S:boundedCOs}

\subsection{Positive semi-definite kernels}
Let $X$ be a set. A function $F: X\times X\to\mathbb{C}$ is a positive semi-definite kernel if for any finite set $\{x_1,\ldots, x_m\}$ of points in $X$, the matrix $(F(x_l,x_j))_{1\leq l,j\leq m}$ is positive semi-definite. That is, for any complex numbers $c_1,\ldots, c_m$, we have
\begin{equation*}
\sum_{j,l=1}^{m}\overline{c}_l{c}_j F(x_l,x_j) \geq 0.
\end{equation*}
We list here a few immediate facts about positive semi-definite kernels.
\begin{itemize}
\item[(F1)] Sums of positive semi-definite kernels are positive semi-definite. (A sum here may be an infinite sum provided that it converges point-wise.)
\item[(F2)] Since the Schur (entry-wise) product of two positive semi-definite square matrices is also positive semi-definite, the product of two positive semi-definite kernels is positive semi-definite.
\item[(F3)] Suppose $F$ is a positive semi-definite kernel, then it follows from (F1) and (F2) that the function $\tilde{F}=\exp(F)-1$ is also a positive semi-definite kernel.
\item[(F4)] If there is a vector space $\mathcal{H}$ over $\mathbb{C}$ with inner product $\langle\cdot,\cdot\rangle_{\mathcal{H}}$ and norm $\|\cdot\|_{\mathcal{H}}$ and there is a vector-valued function $f: X\to\mathcal{H}$ such that $F(x,y)=\langle f(y), f(x)\rangle_{\mathcal{H}}$ for $x,y$ in $X$, then for any $x_1,\ldots, x_m$ in $X$ and real numbers $c_1,\ldots, c_m$, we have
\begin{align*}
\sum_{j,l=1}^{m}\overline{c}_l c_j F(x_l,x_j) & = \sum_{j,l=1}^{m}\langle c_jf(x_j),c_lf(x_l)\rangle_{\mathcal{H}} = \|\sum_{j=1}^{m}c_jf(x_j)\|^2_{\mathcal{H}} \geq 0.
\end{align*}
Therefore, $F$ is a positive semi-definite kernel on $X$. It turns out \cite[Theorem 2.53]{AglerAMS2002} that any positive semi-definite kernel can be represented in this form.
\end{itemize}

Now let $E$ be a Hilbert space and $T$ be a bounded linear operator on $E$. Define $F(z,w)=\langle Tz,w\rangle$ for $z,w\in E$. It is clear that if $F$ is positive semi-definite on $E$, then $F(z,z)\geq 0$ for all $z\in E$, which implies that $T$ is a positive operator. Conversely, if $T$ is positive, then since $F(z,w)=\langle T^{1/2}z,T^{1/2}w\rangle$ (here $T^{1/2}$ denotes the positive square root of $T$), it follows from fact (F4) above that $F$ is positive semi-definite. The following proposition provides a generalization of this observation.

\begin{proposition}\label{Prop:quadratic_forms}
Let $u$ be a vector in $E$, $T$ be a self-adjoint operator on $E$, and $M$ be a real number. Define the function
\begin{equation}
F(z,w) = \langle Tz,w\rangle - \langle z,u\rangle - \langle u,w\rangle + M^2 \text{ for } z,w\in E.
\end{equation}
Then the followings are equivalent
\begin{itemize}
   \item[(a)] The function $F$ is a positive semi-definite kernel on $E$.
   \item[(b)] $F(z,z)\geq 0$ for all $z\in E$.
   \item[(c)] $T$ is a positive operator on $E$ and $u=T^{1/2}\hat{u}$ for some $\hat{u}\in E$ with $\|\hat{u}\|\leq M$.
\end{itemize}
Furthermore, if the conditions in (c) are satisfied and $v$ is the vector of smallest norm such that $u=T^{1/2}v$, then we have
\begin{equation}\label{Eqn:sup_quadratic}
\inf\big\{F(z,z): z\in E\big\} = -\|v\|^2+M^2.
\end{equation}
The vector $v$ is characterized by two conditions: (i) $T^{1/2}v=u$ and (ii) $v$ belongs to $\overline{{\rm ran}}(T^{1/2})$.
\end{proposition}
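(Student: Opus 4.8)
The plan is to establish the cycle of implications $(a)\Rightarrow(b)\Rightarrow(c)\Rightarrow(a)$, then verify the norm formula \eqref{Eqn:sup_quadratic} and the characterization of $v$. The implication $(a)\Rightarrow(b)$ is immediate: taking a single point $x_1=z$ with $c_1=1$ in the definition of positive semi-definiteness gives $F(z,z)\geq 0$.

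For $(b)\Rightarrow(c)$, I would first extract positivity of $T$. For a fixed $z\in E$ and $t>0$, evaluate $F(tz,tz)=t^2\langle Tz,z\rangle - 2t\,{\rm Re}\langle z,u\rangle + M^2\geq 0$; dividing by $t^2$ and letting $t\to\infty$ forces $\langle Tz,z\rangle\geq 0$, so $T\geq 0$. Next I must show $u\in{\rm ran}(T^{1/2})$ with a norm bound. The natural approach is to use the inequality $F(z,z)\geq 0$ rewritten as $2\,{\rm Re}\langle z,u\rangle\leq \langle Tz,z\rangle + M^2 = \|T^{1/2}z\|^2+M^2$ for all $z\in E$. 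Replacing $z$ by $e^{i\theta}z$ and optimizing over $\theta$ upgrades this to $2|\langle z,u\rangle|\leq \|T^{1/2}z\|^2+M^2$, and then scaling $z\mapsto sz$ and optimizing over $s>0$ yields $|\langle z,u\rangle|\leq \|T^{1/2}z\|\cdot M$ for all $z$. This says the linear functional $T^{1/2}z\mapsto \langle z,u\rangle$ is well-defined and bounded by $M$ on ${\rm ran}(T^{1/2})$ — well-defined because $T^{1/2}z=0$ implies $\langle z,u\rangle=0$ — hence extends to $\overline{{\rm ran}}(T^{1/2})$ and, by Riesz representation, there is $\hat u\in\overline{{\rm ran}}(T^{1/2})$ with $\|\hat u\|\leq M$ and $\langle z,u\rangle=\langle T^{1/2}z,\hat u\rangle=\langle z,T^{1/2}\hat u\rangle$ for all $z$, i.e. $u=T^{1/2}\hat u$. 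I expect this step — passing from the pointwise inequality to the factorization $u=T^{1/2}\hat u$ via a bounded functional on the (non-closed) range of $T^{1/2}$ — to be the main technical point, though it is standard once set up correctly.

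For $(c)\Rightarrow(a)$, write $u=T^{1/2}\hat u$ with $\|\hat u\|\leq M$ and compute
\begin{align*}
F(z,w) &= \langle T^{1/2}z,T^{1/2}w\rangle - \langle T^{1/2}z,\hat u\rangle - \langle\hat u,T^{1/2}w\rangle + M^2\\
&= \langle T^{1/2}z-\hat u,\, T^{1/2}w-\hat u\rangle + (M^2-\|\hat u\|^2).
\end{align*}
The first term is of the form $\langle f(w),f(z)\rangle$ with $f(z)=T^{1/2}z-\hat u$, hence positive semi-definite by (F4); the constant $M^2-\|\hat u\|^2\geq 0$ is a positive semi-definite kernel by itself; so $F$ is positive semi-definite by (F1). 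This identity also gives \eqref{Eqn:sup_quadratic} directly: $F(z,z)=\|T^{1/2}z-\hat u\|^2+(M^2-\|\hat u\|^2)$, and taking $v$ to be the minimum-norm solution of $T^{1/2}v=u$ (so $v\in\overline{{\rm ran}}(T^{1/2})$, which forces $v=\hat u$ when $\hat u$ itself lies in $\overline{{\rm ran}}(T^{1/2})$), the infimum over $z$ of $\|T^{1/2}z-v\|^2$ is $0$ because $v\in\overline{{\rm ran}}(T^{1/2})$, giving $\inf_z F(z,z)=M^2-\|v\|^2$.

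Finally, for the characterization of $v$: among all solutions of $T^{1/2}v=u$ (which form a coset of $\ker T^{1/2}$), the minimum-norm element is the unique one orthogonal to $\ker T^{1/2}$, and $(\ker T^{1/2})^\perp=\overline{{\rm ran}}((T^{1/2})^*)=\overline{{\rm ran}}(T^{1/2})$ since $T^{1/2}$ is self-adjoint. So conditions (i) and (ii) pin down $v$ uniquely. Conversely, any $v$ satisfying (i) and (ii) is orthogonal to $\ker T^{1/2}$ and solves the equation, hence is the minimum-norm solution. This completes the proof; the only place demanding care is the functional-analytic argument in $(b)\Rightarrow(c)$ producing $\hat u$ in the closure of the range with the sharp norm bound $M$.
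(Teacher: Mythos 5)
Your proposal is correct and follows essentially the same route as the paper: the cycle $(a)\Rightarrow(b)\Rightarrow(c)\Rightarrow(a)$, extraction of the inequality $|\langle z,u\rangle|\leq M\|T^{1/2}z\|$ from $F(z,z)\geq 0$, the bounded-functional/Riesz argument producing $\hat u$, the completing-the-square identity for $(c)\Rightarrow(a)$ and the infimum formula, and the projection characterization of $v$. The only cosmetic differences are that you obtain the key inequality by optimizing over the scaling parameter $s$ where the paper invokes the discriminant of the quadratic in $r$, and that you extend the functional by continuity to $\overline{{\rm ran}}(T^{1/2})$ (so $\hat u=v$ automatically) where the paper uses Hahn--Banach and then projects.
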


\begin{proof}
It is obvious from the definition of positive semi-definite kernels that (a) implies (b). Now suppose (b) holds. Let $z$ be in $E$. Choose a complex number $\gamma$ such that $|\gamma|=1$ and $\langle\gamma z,u\rangle = \gamma\langle z,u\rangle = |\langle z,u\rangle|$. For any real number $r$, since $F(r\gamma z,r\gamma z)\geq 0$, we obtain
\begin{equation*}
r^2\langle Tz,z\rangle - 2r|\langle z,u\rangle| + M^2 \geq 0.
\end{equation*}
Because this inequality holds for all $r\in\mathbb{R}$ we see that $\langle Tz,z\rangle\geq 0$ and $\big|\langle z,u\rangle\big|^2 \leq M^2\langle Tz,z\rangle$. Since $z$ was arbitrary, we conclude that $T$ is a positive operator and we have $|\langle z,u\rangle|\leq |M|\|T^{1/2}z\|$ for $z\in E$. That this fact implies that $u$ belongs to the range of $T^{1/2}$ is well known but for completeness, we include here a proof. Define a linear functional on the range of $T^{1/2}$ by $\Lambda(T^{1/2}z) = \langle z,u\rangle$, for $z\in E$. By the inequality, $\Lambda$ is well defined and bounded on $T^{1/2}(E)$ with $\|\Lambda\|\leq |M|$. Extending $\Lambda$ to all $E$ by the Hahn-Banach theorem and using the Riesz's representation theorem, we obtain an element $\hat{u}$ in $E$ with $\|\hat{u}\|=\|\Lambda\|\leq |M|$ such that $\Lambda(w)=\langle w,\hat{u}\rangle$ for all $w\in E$. We then have, for any $z\in E$,
\begin{align*}
\langle z,u\rangle & = \Lambda(T^{1/2}z) = \langle T^{1/2}z,\hat{u}\rangle = \langle z,T^{1/2}\hat{u}\rangle.
\end{align*}
Thus $u=T^{1/2}\hat{u}$ and hence (c) follows.

Now assume that (c) holds. For any $z,w$ in $E$,
\begin{align*}
F(z,w) & = \langle T^{1/2}z,T^{1/2}w\rangle - \langle T^{1/2}z,\hat{u}\rangle - \langle \hat{u},T^{1/2}w\rangle + M^2\\
& = \langle T^{1/2}z-\hat{u},T^{1/2}w-\hat{u}\rangle - \|\hat{u}\|^2+M^2.
\end{align*}
Since $-\|\hat{u}\|^2+M^2 \geq 0$, we conclude that $F$ is positive semi-definite. 

Now the preimage of $u$ under $T^{1/2}$ is the non-empty, closed, convex set $\hat{u}+\ker(T^{1/2})$. By a property of Hilbert spaces, there exists a unique vector $v$ of smallest norm in this set. In fact, $v$ is the orthogonal projection of $\hat{u}$ on  $(\ker(T^{1/2}))^{\bot}$. Since $(\ker(T^{1/2}))^{\bot} =\overline{\rm ran}\,(T^{1/2})$, we conclude that $v=P_{\overline{\rm ran}\,(T^{1/2})}\hat{u}$, where $P_{\overline{\rm ran}\,(T^{1/2})}$ is the orthogonal projection from $E$ onto the closure of the range of $T^{1/2}$. Using the facts that $u=T^{1/2}v$ and that $v$ belongs to $\overline{\rm ran}\,(T^{1/2})$, we obtain
\begin{align*}
\inf\big\{F(z,z): z\in E\big\} & = \inf\big\{\|T^{1/2}z-v\|^2: z\in E\big\} - \|v\|^2+M^2 \\
& = -\|v\|^2+M^2.
\end{align*}

To prove the characterization of $v$, let $v'$ be a vector in $\overline{\rm ran}\,(T^{1/2})$ with $u=T^{1/2}v'$. Then the difference $v-v'$ belongs to both $\ker(T^{1/2})$ and $\overline{\rm ran}\,(T^{1/2})$. Since these subspaces are orthogonal complements of each other, we conclude that $v=v'$.
\end{proof}

\begin{remark}\label{R:characterize_v}
In the case $E=\mathbb{C}^n$ for some integer $n\geq 1$, since $\overline{\rm ran}\,(T^{1/2})={\rm ran}\,(T^{1/2})$, the vector $v$ in Proposition \ref{Prop:quadratic_forms} is characterized by $v=T^{1/2}\zeta$ for any $\zeta\in E$ that satisfies $T\zeta = u$.
\end{remark}

\subsection{Bounded composition operators}
We are now ready for the proof of Theorem \ref{T:bounded_COs} on the boundedness of $C_{\varphi}$.

\begin{proof}[Proof of Theorem \ref{T:bounded_COs}]
Suppose first that $C_{\varphi}$ is bounded on $\mathcal{H}(E)$. By Lemma \ref{Lemma:boundedCOs}, for any $z\in E$, $\|C_{\varphi}\|^2\exp(\|z\|^2) - \exp(\|\varphi(z)\|^2)\geq 0$. This implies
\begin{align}\label{Eqn:inequality_COs}
\|\varphi(z)\|^2-\|z\|^2\leq 2\ln\|C_{\varphi}\|.
\end{align}
For a fixed unit vector $a\in E$, put $f_a(z)=\langle z,a\rangle$ and $F_{a}(z)=\langle\varphi(z),a\rangle$. Then $F_{a}$, which equals to $C_{\varphi}(f_a)$, belongs to $\mathcal{H}(E)$. Therefore $F_a$ can be represented as a power series
$$F_a(z) = F_a(0)+\sum_{m=1}^{\infty}\langle z^{m},\zeta_{m}\rangle\text{ for all } z \text{ in } E,$$
where $\zeta_1\in E, \zeta_2\in E^{2}, \ldots$. Now the inequality $|F_a(z)|\leq\|\varphi(z)\|$ together with \eqref{Eqn:inequality_COs} gives $|F_a(z)|^2 - \|z\|^2 \leq 2\ln(\|C_{\varphi}\|)$ for all $z$ in $E$. This implies that $\|\zeta_1\|\leq 1$ and $\zeta_m=0$ for all $m\geq 2$. In particular, $z\mapsto F_a(z)-F_a(0)$ is linear functional with norm at most $1$.

Since the map $z\mapsto\langle\varphi(z)-\varphi(0),a\rangle = F_a(z)-F_a(0)$ is a linear functional with norm at most one for any unit vector $a\in E$, we conclude that $z\mapsto \varphi(z)-\varphi(0)$ is a linear operator with norm at most $1$. Therefore, $\varphi(z) = Az+b$ for some linear operator $A$ on $E$ with $\|A\|\leq 1$ and some vector $b$ in $E$.

Now \eqref{Eqn:inequality_COs} gives $\|z\|^2-\|Az+b\|^2+2\ln(\|C_{\varphi}\|)\geq 0$ for all $z\in E$, which is equivalent to
\begin{align}\label{Eqn:quadratic_COs}
\langle (I-A^{*}A)z,z\rangle - \langle z,A^{*}b\rangle - \langle A^{*}b,z\rangle -\|b\|^2 + 2\ln(\|C_{\varphi}\|)\geq 0.
\end{align}
By Proposition \ref{Prop:quadratic_forms}, we conclude that $A^{*}b$ belongs to the range of $(I-A^{*}A)^{1/2}$. Choose $v\in E$ of smallest norm such that $A^{*}b = (I-A^{*}A)^{1/2}(v)$. Then by Proposition \ref{Prop:quadratic_forms} again, the quantity $$2\ln(\|C_{\varphi}\|)-\|v\|^2-\|b\|^2,$$ being the infimum of the left hand side of \eqref{Eqn:quadratic_COs}, is non-negative. Thus we have
\begin{align}\label{Eqn:norm}
\|C_{\varphi}\| \geq \exp\Big(\frac{1}{2}\|v\|^2+\frac{1}{2}\|b\|^2\Big).
\end{align}

Conversely, suppose $\varphi(z)=Az+b$ such that $\|A\|\leq 1$; $A^{*}b$ belongs to the range of $(I-A^{*}A)^{1/2}$; and $v\in E$ is of smallest norm satisfying $A^{*}b=(I-A^{*}A)^{1/2}(v)$. We will show that the operator $C_{\varphi}$ is bounded on $\mathcal{H}(E)$ with norm at most the quantity on the right hand side of \eqref{Eqn:norm} (hence the inequality in \eqref{Eqn:norm} is in fact an equality). 

We define for $z,w\in E$,
\begin{align*}
F(z,w) & = \langle z,w\rangle - \langle\varphi(z),\varphi(w)\rangle +\|b\|^2+\|v\|^2\\
& = \langle (I-A^{*}A)z,w\rangle - \langle z,A^{*}b\rangle - \langle A^{*}b,w\rangle + \|v\|^2.
\end{align*}
By Proposition \ref{Prop:quadratic_forms}, $F$ is a positive semi-definite kernel, which implies that $\exp(F)-1$ is positive semi-definite. Now let $G$ denote the positive semi-definite kernel defined by $G(z,w)=\exp(\langle\varphi(z),\varphi(w)\rangle)$ for $z,w\in E$. Then $G\cdot(\exp(F)-1)$ is also a positive semi-definite kernel. Since for $z,w\in E$, 
\begin{align*}
G(z,w)\big(\!\exp(F(z,w))-1\!\big) & = \exp(\|b\|^2\!\!+\!\!\|v\|^2)\!\exp(\langle z,w\rangle) - \exp(\langle\varphi(z),\varphi(w)\rangle),
\end{align*}
we conclude, using Lemma \ref{Lemma:boundedCOs}, that $C_{\varphi}$ is bounded on $\mathcal{H}(E)$ and
\begin{align}\label{Eqn:norm_reverse}
\|C_{\varphi}\|\leq\exp\Big(\frac{1}{2}\|b\|^2+\frac{1}{2}\|v\|^2\Big).
\end{align}
This completes the proof of the theorem.
\end{proof}

\begin{remark} 
If $\|A\|<1$, then the operator $I-A^{*}A$ is invertible, hence $(I-A^{*}A)^{1/2}$ is also invertible and as a result, $A^{*}b$ belongs to $(I-A^{*}A)^{1/2}(E)$ for any $b$ in $E$. Theorem \ref{T:bounded_COs} then shows that $C_{\varphi}$ is bounded for any $\varphi$ of the form $\varphi(z)=Az+b$. It turns out (by Theorem \ref{T:cpt_COs}) that $C_{\varphi}$ is in fact compact.
\end{remark}

\subsection{The finite-dimensional case}
We discuss here the case $E=\mathbb{C}^n$ for some positive integer $n$. Suppose $A$ is a bounded operator on $\mathbb{C}^n$ with $\|A\|\leq 1$ and $b$ is a vector in $\mathbb{C}^n$. We claim that $A^{*}b$ belongs to the range of $(I-A^{*}A)^{1/2}$ if and only if $\langle b,A\zeta\rangle=0$ whenever $\|A\zeta\|=\|\zeta\|$. In fact, for $\zeta\in\mathbb{C}^n$, we have 
\begin{align*}
\|\zeta\|^2 - \|A\zeta\|^2 & = \langle\zeta,\zeta\rangle - \langle A^{*}A\zeta,\zeta\rangle 
= \langle (I-A^{*}A)\zeta,\zeta\rangle = \|(I-A^{*}A)^{1/2}\zeta\|^2.
\end{align*}
Therefore $\|A\zeta\|=\|\zeta\|$ if and only if $\zeta$ belongs to $\ker(I-A^{*}A)^{1/2}$. This shows that $\langle b,A\zeta\rangle=0$ for all such $\zeta$ if and only if $A^{*}b$ is in the orthogonal complement of $\ker(I-A^{*}A)^{1/2}$, which is $\overline{{\rm ran}}\,(I-A^{*}A)^{1/2}$. On $\mathbb{C}^n$, the identity $\overline{{\rm ran}}\,(I-A^{*}A)^{1/2} = {\rm ran}\,(I-A^{*}A)^{1/2}$ holds, so the claim follows. We then recover Theorem \ref{T:CMS}.

Also, by Remark \ref{R:characterize_v}, the vector $v$ in \eqref{Eqn:norm_formula} is characterized by $v=(I-A^{*}A)^{1/2}w_0$ for any $w_0\in \mathbb{C}^n$ satisfying $(I-A^{*}A)w_0 = A^{*}b$. It then follows that $$\|v\|^2+\|b\|^2 = \|(I-A^{*}A)^{1/2}w_0\|^2 +\|b\|^2 = \|w_0\|^2 - \|Aw_0\|^2 + \|b\|^2.$$ Therefore, we recover the norm formula given in Theorem \ref{T:CMS_norm}.

\subsection{The infinite-dimensional case}
As we have seen above, the requirement that $\langle A\zeta,b\rangle=0$ whenever $\|A\zeta\|=\|\zeta\|$ is equivalent to the requirement that $A^{*}b$ belongs to the \emph{closure} of the range of $(I-A^{*}A)^{1/2}$. In the case $E$ has infinite dimension, this certainly does not imply that $A^{*}b$ belongs to the range of $(I-A^{*}A)^{1/2}$ and hence, by Theorem \ref{T:bounded_COs}, the composition operator $C_{\varphi}$ (with $\varphi(z)=Az+b)$ may not be bounded on $\mathcal{H}(E)$. 

We provide here a concrete example. Let $E$ be a separable Hilbert space with an orthonormal basis $\{v_{m}: m=1, 2, \ldots\}$. Let $A$ be the diagonal operator with $Av_m = \alpha_m v_m$ where $(1-m^{-3})^{1/2}<\alpha_m<1$ for all integers $m\geq 1$. Put $b=\sum_{m=1}^{\infty}m^{-1}v_m$, which belongs to $E$. Since $\|A\zeta\|=\|\zeta\|$ if and only if $\zeta=0$, we see that the condition $\langle A\zeta,b\rangle=0$ whenever $\|A\zeta\|=\|\zeta\|$ holds trivially.

Define $\varphi(z)=Az+b$ for $z\in E$. We claim that 
\begin{equation}\label{Eqn:unboundedCOs}
\sup_{z\in E}\big(\|\varphi(z)\|-\|z\|^2\big)=\infty
\end{equation} and hence, by Lemma \ref{Lemma:boundedCOs}, the operator $C_{\varphi}$ is not bounded on $\mathcal{H}(E)$. For any integer $m\geq 1$, put $t_m = \alpha_m (1-|\alpha_m|^2)^{-1}m^{-1}$. A simple calculation gives
\begin{align*}
\|\varphi(t_mv_m)\|^2 - \|t_mv_m\|^2 & = \|t_mAv_m + b\|^2-t_m^2 \geq (t_m\alpha_m+m^{-1})^2 - t_m^2\\
& = -(1-|\alpha_m|^2)t_m^2 + 2t_m m^{-1}\alpha_{m} +m^{-2}\\
& = \alpha_m^2 (1-\alpha_m^2)^{-1}m^{-2} + m^{-2}\\
& = (1-\alpha_m^2)^{-1}m^{-2}\\
& > m \quad(\text{since } (1-\alpha_m^2)<m^{-3}).
\end{align*}
This then gives \eqref{Eqn:unboundedCOs}.

\section{Compactness of composition operators on $\mathcal{H}(E)$}
\label{S:compactCOs}
In this section we characterize mappings $\varphi$ that induce compact operators $C_{\varphi}$ on $\mathcal{H}(E)$. Before discussing the general case, let us consider first the case $\varphi(z)=Az$, where $A$ is a linear operator on $E$ with $\|A\|\leq 1$. In what follows, we will simply write $C_{A}$ for $C_{\varphi}$.

It turns out that via the anti-unitary $J$ that we have seen in Proposition \ref{Prop:Fock_realization}, the operator $C_{A}$ has an easy description. Let $f$ be a continuous $m$-homogeneous polynomial on $E$. Then there is an element $a_m\in E^{m}$ such that $f(z)=\langle z^m,a_m\rangle$ for $z\in E$. This gives
\begin{align*}
(C_{A}f)(z) = \langle (Az)^m, a_m\rangle = \langle A^{\otimes m}(z^m),a_m\rangle = \langle z^{m}, (A^{*})^{\otimes m}a_m\rangle,
\end{align*}
where $A^{\otimes m}$ denotes the tensor product of $m$ copies of $A$. We conclude that $C_{A}f$ is also a continuous $m$-homogeneous polynomial. Therefore, the space $\mathcal{P}_m(E)$ of continuous $m$-homogeneous polynomials is invariant under $C_{A}$ and we have the identity $C_{A}|_{\mathcal{P}_m(E)} = J^{-1}(A^{*})^{\otimes m}J$. This, together with the decomposition \eqref{Eqn:SB_decomposition}, gives
\begin{align}\label{Eqn:direct_sum}
C_{A} = J^{-1}\big(1 \oplus A^{*}\oplus (A^{*})^{\otimes 2} \oplus (A^{*})^{\otimes 3} \oplus\cdots\big)J,
\end{align}
where the sum is an infinite direct sum of operators. The identity \eqref{Eqn:direct_sum} shows that $C_{A}$ is compact if and only if $(A^{*})^{\otimes m}$ is compact for each $m\geq 1$ and $\|(A^{*})^{\otimes m}\|\rightarrow 0$ as $m\to\infty$. Using the fact that $(A^{*})^{\otimes m}$ is compact if and only if $A^{*}$ (and hence $A$) is compact and the well known identity $\|(A^{*})^{\otimes m}\| = \|A^{*}\|^m=\|A\|^m$, we conclude that $C_{A}$ is compact if and only if $A$ is compact and $\|A\|<1$. We have thus proved a special case of Theorem \ref{T:cpt_COs}. A proof of the full version of Theorem \ref{T:cpt_COs} is given below.

\begin{proof}[Proof of Theorem \ref{T:cpt_COs}]
Assume first that $C_{\varphi}$ is compact. By Theorem \ref{T:bounded_COs}, there is a linear operator $A$ on $E$ with $\|A\|\leq 1$ and a vector $b\in E$ such that $A^{*}b\in (I-A^{*}A)^{1/2}(E)$ and $\varphi(z)=Az+b$ for all $z\in E$. We will show that $A$ is compact and $\|A\|<1$.

Let $\{u_m\}_{m=1}^{\infty}$ be a sequence in $E$ that converges weakly to zero. For each $m$, put $f_{m}(z)=\langle z,u_m\rangle$ for $z\in E$. Then $f_m\to 0$ weakly as $m\to\infty$ by Lemma \ref{L:weak_convergence}. This implies that $\lim_{m\to\infty}\|C_{\varphi}f_m\| = 0$. But
\begin{align*}
(C_{\varphi}f_m)(z) & = f_m(\varphi(z))=\langle Az+b,u_m\rangle = \langle z,A^{*}u_m\rangle + \langle b,u_m\rangle,
\end{align*}
so $\|C_{\varphi}f_m\|^2 = \|A^{*}u_m\|^2+|\langle b,u_m\rangle|^2$. We then have $\lim_{m\to\infty}\|A^{*}u_m\|^2=0$. Therefore, $A^{*}$ is a compact operator and hence, $A$ is also compact.

Suppose it were true that $\|A\|=1$. Then $\|A^{*}A\|=1$. Since $A^{*}A$ is a positive compact operator, $1$ is its eigenvalue. So there is a vector $w\neq 0$ such that $A^{*}Aw=w$, which is equivalent to $(I-A^{*}A)^{1/2}w=0$. Since $A^{*}b$ belongs to the range of $(I-A^{*}A)^{1/2}$, we infer that $\langle w,A^{*}b\rangle=0$, or equivalently, $\langle Aw,b\rangle=0$. For any real number $r$, the identity $C^{*}_{\varphi}(K_{rw}) = K_{\varphi(rw)}$ together with a computation reveals
\begin{align*}
\Big\|C_{\varphi}^{*}\Big(\frac{K_{rw}}{\|K_{rw}\|}\Big)\Big\|^2 & = \frac{\|K_{\varphi(rw)}\|^2}{\|K_{rw}\|^2} = \exp\big(\|\varphi(rw)\|^2-\|rw\|^2\big)\\
& = \exp\big(\|rAw+b\|^2-r^2\|w\|^2\big) = \exp(\|b\|^2).
\end{align*}
Since $K_{rw}/\|K_{rw}\|\to 0$ weakly as $r\to\infty$ by Lemma \ref{L:weak_convergence} again, it follows that $C_{\varphi}^{*}$ is not a compact operator. Hence $C_{\varphi}$ is not compact either. This gives a contradiction. Therefore, we have $\|A\|<1$.

Conversely, suppose $\varphi(z)=Az+b$, where $A$ is a compact operator on $E$ with $\|A\|<1$ and $b$ is an arbitrary vector in $E$. Choose a positive number $\alpha$ such that $\|A\|<\alpha<1$. Put $\varphi_1(z)=\alpha^{-1}Az$ and $\varphi_2(z)=\alpha z+b$ for $z\in E$. Then as we have shown above, $C_{\varphi_1}$ is compact. By Theorem \ref{T:bounded_COs}, $C_{\varphi_2}$ is bounded. Since $\varphi = \varphi_2\circ\varphi_1$, it follows that $C_{\varphi} = C_{\varphi_1}C_{\varphi_2}$ and hence $C_{\varphi}$ is a compact operator.
\end{proof}

%\subsection{Schatten class composition operators}

\section{Normal, isometric and co-isometric composition operators}
As consequences of Theorem \ref{T:bounded_COs}, we determine in this section the mappings $\varphi$ that give rise to normal, isometric or co-isometric operators $C_{\varphi}$ (recall that an operator on the Hilbert space is called co-isometric if its adjoint is an isometric operator). We will make use of the formulas $C_{\varphi}K_0 = K_0$, $C_{\varphi}^{*}K_0 = K_{\varphi(0)}$ (by \eqref{Eqn:adjointFormula}) and $C_{\varphi}^{*}C_{\varphi}K_0=K_{\varphi(0)}$, where $K_0\equiv 1$ is the reproducing kernel function of $\mathcal{H}(E)$ at $0$.

We first show that if $C_{\varphi}$ is either a normal, isometric or co-isometric operator on $\mathcal{H}(E)$, then $\varphi(0)=0$. The argument is fairly standard. In fact, if $C_{\varphi}$ is normal, then we have $\|C_{\varphi}^{*}K_0\| = \|C_{\varphi}K_0\|$, which gives $\|K_{\varphi(0)}\|=\|K_0\|$. If $C_{\varphi}$ is isometric, then $C^{*}_{\varphi}C_{\varphi}K_0 = K_0$, which gives $K_{\varphi(0)}=K_0$ and hence, in particular, $\|K_{\varphi(0)}\|=\|K_0\|$. If $C_{\varphi}$ is co-isometric then we also have $\|K_0\|=\|C^{*}_{\varphi}K_0\| = \|K_{\varphi(0)}\|$. Since $\|K_{\varphi(0)}\|^2=\exp(-\|\varphi(0)\|^2)$ and $\|K_0\|^2=1$, we conclude that in each of the above cases, $\varphi(0)=0$.

Now since $\varphi(0)=0$, Theorem \ref{T:bounded_COs} shows that $\varphi(z)=Az$ for some operator $A$ on $E$ with $\|A\|\leq 1$. Then $C_{\varphi}=C_{A}$ and $C_{\varphi}^{*}=C_{A^{*}}$ and hence
\begin{equation*}
C^{*}_{\varphi}C_{\varphi} = C_{A^{*}}C_{A} = C_{AA^{*}} \quad\text{ and }\quad C_{\varphi}C_{\varphi}^{*} = C_{A}C_{A^{*}} = C_{A^{*}A}.
\end{equation*}
We obtain

\begin{proposition}\label{P:normalCOs} Let $\varphi$ be a mapping on $E$ such that $C_{\varphi}$ is a bounded operator on $\mathcal{H}(E)$.
\begin{itemize}
\item[(a)] $C_{\varphi}$ is normal if and only if there exists a normal operator $A$ on $E$ with $\|A\|\leq 1$ such that $\varphi(z)=Az$ for all $z\in E$.
\item[(b)] $C_{\varphi}$ is isometric if and only if there exists a co-isometric operator $A$ on $E$ such that $\varphi(z)=Az$ for all $z\in E$.
\item[(c)] $C_{\varphi}$ is co-isometric if and only if there exists an isometric operator $A$ on $E$ such that $\varphi(z)=Az$ for all $z\in E$.
\end{itemize}
\end{proposition}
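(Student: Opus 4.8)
The plan is to exploit the reduction already carried out in the discussion preceding the statement. Under the standing hypothesis that $C_{\varphi}$ is bounded, each of the three properties (normal, isometric, co-isometric) forces $\|K_{\varphi(0)}\| = \|K_0\|$ and hence $\varphi(0)=0$, so by Theorem \ref{T:bounded_COs} we may write $\varphi(z)=Az$ with $\|A\|\le 1$, and then $C_{\varphi}^{*}C_{\varphi}=C_{AA^{*}}$ and $C_{\varphi}C_{\varphi}^{*}=C_{A^{*}A}$. Thus everything reduces to recovering $A$ from the composition operator it induces, and to the observation that the identity map induces $C_I = I$.

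First I would record the elementary fact that $A\mapsto C_A$ is injective on bounded operators: restricting the direct-sum decomposition \eqref{Eqn:direct_sum} to the summand $\mathcal{P}_1(E)$ (on which $J$ is just the identification of $\mathcal{P}_1(E)$ with $E$) shows that $C_A|_{\mathcal{P}_1(E)}$ is unitarily equivalent to $A^{*}$; equivalently, $C_A K_w = K_{A^{*}w}$ for every $w\in E$ and the assignment $w\mapsto K_w$ is injective. Either way, $C_A=C_{A'}$ implies $A^{*}=(A')^{*}$, i.e. $A=A'$. In particular $C_B=I=C_I$ if and only if $B=I$.

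With this in hand each part is immediate. For (a): $C_{\varphi}$ is normal iff $C_{AA^{*}}=C_{A^{*}A}$ iff $AA^{*}=A^{*}A$, i.e. iff $A$ is normal; conversely, if $\varphi(z)=Az$ with $A$ normal and $\|A\|\le 1$, then $C_A$ is bounded by Theorem \ref{T:bounded_COs} and $C_{\varphi}^{*}C_{\varphi}=C_{AA^{*}}=C_{A^{*}A}=C_{\varphi}C_{\varphi}^{*}$. For (b): $C_{\varphi}$ is isometric iff $C_{\varphi}^{*}C_{\varphi}=I$ iff $C_{AA^{*}}=C_I$ iff $AA^{*}=I$, i.e. iff $A$ is co-isometric; note that such an $A$ satisfies $\|A\|=1\le 1$, so $C_A$ is bounded. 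For (c): $C_{\varphi}$ is co-isometric iff $C_{\varphi}C_{\varphi}^{*}=I$ iff $C_{A^{*}A}=C_I$ iff $A^{*}A=I$, i.e. iff $A$ is isometric.

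The argument presents no genuine obstacle; it is essentially bookkeeping on top of the identities already derived in the text. The only points deserving a word of justification are the injectivity of $A\mapsto C_A$ used above, and the fact that in deducing $C_{\varphi}^{*}C_{\varphi}=C_{AA^{*}}$ from the composition rule $C_{\psi}C_{\varphi}=C_{\varphi\circ\psi}$ both factors must be bounded operators, which holds since $\|A\|=\|A^{*}\|\le 1$.
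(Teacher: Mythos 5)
Your proposal is correct and follows essentially the same route as the paper: reduce to $\varphi(0)=0$, invoke Theorem \ref{T:bounded_COs} to get $\varphi(z)=Az$, and use $C_{\varphi}^{*}C_{\varphi}=C_{AA^{*}}$, $C_{\varphi}C_{\varphi}^{*}=C_{A^{*}A}$. The one thing you add is an explicit justification of the injectivity of $A\mapsto C_{A}$ (via $C_{A}K_{w}=K_{A^{*}w}$ or the restriction to $\mathcal{P}_1(E)$), a step the paper leaves implicit; this is a reasonable and correct supplement rather than a different argument.
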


\begin{remark} Statement (a) in Proposition \ref{P:normalCOs} holds also for composition operators on the Hardy and Bergman spaces of the unit ball (see \cite[Theorem 8.1]{CowenCRCP1995}), where an analogous result to Theorem \ref{T:bounded_COs} is not available. (In fact, on the Hardy and Begrman spaces, mappings that are not affine can still give rise to bounded composition operators.) The proof of \cite[Theorem 8.1]{CowenCRCP1995} can be adapted to prove Proposition \ref{P:normalCOs} (a) without appealing to Theorem \ref{T:bounded_COs} in the case $E$ has finite dimension. On the other hand, since that proof relies on the finiteness of the dimension, it does not seem to work when $E$ has infinite dimension.
\end{remark}

\begin{remark} In the case $E=\mathbb{C}^n$ for some positive integer $n$, isometric operators on $E$ are also co-isometric and vice versa, and all these operators are unitary. Statements (b) and (c) in Proposition \ref{P:normalCOs} then imply that $C_{\varphi}$ is isometric on $\mathcal{F}_n$ if and only if it is co-isometric if and only if it is unitary. 
\end{remark}

\bibliography{MyRefs}
\bibliographystyle{amsplain}
\end{document}